\theoremstyle{plain}
\newtheorem{thm}{Theorem}[section]
\newtheorem{cor}{Corollary}[section]
\theoremstyle{remark}
\newtheorem{rem}{Remark}[section]
\numberwithin{equation}{section}
\DeclareMathOperator{\td}{d}
\DeclareMathOperator{\ti}{i}
\DeclareMathOperator{\te}{e}
\DeclareMathOperator{\sinc}{sinc}
\DeclareMathOperator{\sinhc}{sinhc}
\DeclareMathOperator{\sech}{sech}
\DeclareMathOperator{\csch}{csch}
\DeclareMathOperator{\arcsinh}{arcsinh}
\DeclareMathOperator{\arccosh}{arccosh}
\DeclareMathOperator{\arctanh}{arctanh}
\begin{document}

\title[Series expansions for real powers of sinc function]
{Several series expansions for real powers and several formulas for partial Bell polynomials of sinc and sinhc functions in terms of central factorial and Stirling numbers of second kind}

\author[F. Qi]{Feng Qi}
\address{Institute of Mathematics, Henan Polytechnic University, Jiaozuo 454010, Henan, China\newline\indent
School of Mathematical Sciences, Tiangong University, Tianjin 300387, China}
\email{\href{mailto: F. Qi <qifeng618@gmail.com>}{qifeng618@gmail.com}, \href{mailto: F. Qi <qifeng618@hotmail.com>}{qifeng618@hotmail.com}, \href{mailto: F. Qi <qifeng618@qq.com>}{qifeng618@qq.com}}
\urladdr{\url{https://qifeng618.wordpress.com}, \url{https://orcid.org/0000-0001-6239-2968}}

\author[P. Taylor]{Peter Taylor}
\address{Independent researcher, Valencia, Spain}
\email{\href{mailto: P. Taylor <pjt33@cantab.net>}{pjt33@cantab.net}}
\urladdr{\url{https://stackexchange.com/users/278703/peter-taylor}}

\begin{abstract}
In the paper, with the aid of the Fa\`a di Bruno formula, in terms of central factorial numbers of the second kind, and with the terminology of the Stirling numbers of the second kind, the authors derive several series expansions for any positive integer powers of the sinc and sinhc functions, discover several closed-form formulas for partial Bell polynomials of all derivatives of the sinc function, establish several series expansions for any real powers of the sinc and sinhc functions, and present several identities for central factorial numbers of the second kind and for the Stirling numbers of the second kind.
\end{abstract}

\keywords{Fa\`a di Bruno formula; series expansion; sinc function; sinhc function; positive integer power; real power; partial Bell polynomial; closed-form formula; Stirling number of the second kind; central factorial number of the second kind; weighted Stirling number of the second kind; combinatorial proof}

\subjclass{Primary 41A58; Secondary 05A19, 11B73, 11B83, 11C08, 33B10}

\thanks{This paper was typeset using\AmS-\LaTeX}

\maketitle
\tableofcontents

\section{Motivations}
According to common knowledge in complex analysis, the principal value of the number $\alpha^\beta$ for $\alpha,\beta\in\mathbb{C}$ with $\alpha\ne0$ is defined by $\alpha^\beta=\te^{\beta\ln\alpha}$, where $\ln\alpha=\ln|\alpha|+\ti\arg\alpha$ and $\arg\alpha$ are principal values of the logarithm and argument of $\alpha\ne0$ respectively. In what follows, we always consider principal values of real or complex functions discussed in this paper.
\par
In mathematical sciences, one commonly considers elementary functions
\begin{gather*}
\te^z, \quad \ln(1+z), \quad \sin z, \quad \csc z, \quad \cos z, \quad \sec z, \quad \tan z, \quad \cot z,\\
\arcsin z, \quad \arccos z, \quad \arctan z, \quad \sinh z, \quad \csch z, \quad \cosh z, \quad \sech z, \\
\tanh z, \quad \coth z,\quad \arcsinh z, \quad \arccosh z, \quad \arctanh z
\end{gather*}
and their series expansions at the point $z=0$. Their series expansions can be found in mathematical handbooks such as~\cite{abram, Gradshteyn-Ryzhik-Table-8th, NIST-HB-2010}.
\par
What are series expansions at $x=0$ of positive integer powers or real powers of these functions?
\par
It is combinatorial knowledge~\cite{Charalambides-book-2002, Comtet-Combinatorics-74} that coefficients of the series expansion of the power function $(\te^z-1)^k$ for $k\in\mathbb{N}=\{1,2,\dotsc\}$ are the Stirling numbers of the second kind, while coefficients of the series expansion of the power function $[\ln(1+z)]^k$ for $k\in\mathbb{N}$ are the Stirling numbers of the first kind. In other words, the power functions $(\te^z-1)^k$ and $[\ln(1+z)]^k$ for $k\in\mathbb{N}$ are generating functions of the Stirling numbers of the first and second kinds.
\par
In the paper~\cite{Carlitz-Fibonacci-1980-I}, among other things, Carlitz introduced the notion of weighted Stirling numbers of the second kind $R(n,k,r)$. Carlitz also proved in~\cite{Carlitz-Fibonacci-1980-I} that the numbers $R(n,k,r)$ can be generated by
\begin{equation}\label{S(n=k=x)-dfn}
\frac{(\te^z-1)^k}{k!}\te^{rz}=\sum_{n=k}^\infty R(n,k,r)\frac{z^n}{n!}
\end{equation}
and can be explicitly expressed by
\begin{equation}\label{S(n=k=x)-satisfy-eq}
R(n,k,r)=\frac1{k!}\sum_{j=0}^k(-1)^{k-j}\binom{k}{j}(r+j)^n
\end{equation}
for $r\in\mathbb{R}$ and $n\ge k\in\mathbb{N}_0=\{0,1,2,\dotsc\}$.
Specially, when $r=0$, the quantities $R(n,k,0)$ become the Stirling numbers of the second kind $S(n,k)$. By the way, the notion
$$
{n\brace k}_r=R(n-r,k-r,r)
$$
is called the $r$-Stirling numbers of the second kind in~\cite{Broder} by Broder.
\par
The central factorial numbers of the second kind $T(n,\ell)$ for $n\ge \ell\in\mathbb{N}_0$ can be generated~\cite{BSSV-NFAO-1989, Merca-Period-2016} by
\begin{equation}\label{T(n-k)-Gen-Eq}
\frac{1}{\ell!}\biggl(2\sinh\frac{z}{2}\biggr)^\ell=\sum_{n=\ell}^{\infty}T(n,\ell)\frac{z^n}{n!}.
\end{equation}
In~\cite[Chapter~6, Eq.~(26)]{Riordan-B-1968}, it was established that
\begin{equation}\label{T(n-k)-EF-Eq(3.1)}
T(n,\ell)=\frac1{\ell!} \sum_{j=0}^{\ell}(-1)^{j}\binom{\ell}{j}\biggl(\frac{\ell}2-j\biggr)^n.
\end{equation}
Note that $T(0,0)=1$ and $T(n,0)=0$ for $n\in\mathbb{N}$.
See also~\cite[Proposition~2.4, (xii)]{BSSV-NFAO-1989} and~\cite{msaen-566448.tex, centr-bell-polyn.tex}. Comparing~\eqref{T(n-k)-Gen-Eq} with~\eqref{S(n=k=x)-dfn} or comparing~\eqref{T(n-k)-EF-Eq(3.1)} with~\eqref{S(n=k=x)-satisfy-eq} gives the relation
\begin{equation}\label{Weight-Stirl-2nd-Cent-Fact-No}
R\biggl(n,\ell,-\frac{\ell}{2}\biggr)=T(n,\ell)
\end{equation}
between weighted Stirling numbers of the second kind and central factorial numbers of the second kind.
See also~\cite[Theorem~3.1]{centr-bell-polyn.tex}.
\par
In the handbook~\cite{Gradshteyn-Ryzhik-Table-8th}, series expansions at $z=0$ of the functions $\arcsin^2z$, $\arcsin^3z$, $\sin^2z$, $\cos^2z$, $\sin^3z$, and $\cos^3z$ are collected.
\par
In the papers~\cite{Borwein-Chamberland-IJMMS-2007, AADM-3164.tex, AIMS-Math20210491.tex, Migram-IFTS-2006, Taylor-arccos-v2.tex, Wilf-Ward-2010P.tex} and plenty of references collected therein, the series expansions at $z=0$ of the functions
$\arcsin^mz$, $\arcsinh^mz$, $\arctan^mz$, $\arctanh^mz$ for $m\in\mathbb{N}$ have been established, applied, reviewed, and surveyed.
\par
In the papers~\cite{Brychkov-ITSF-2009, Tan-Der-App-Thanks.tex}, explicit series expansions at $z=0$ of the functions $\tan^2z$, $\tan^3z$, $\cot^2z$, $\cot^3z$, $\sin^mz$, $\cos^mz$ for $m\in\mathbb{N}$ were written down.
\par
In the papers~\cite{Baricz-AML-2010, Bender-Brody-Meister-JMP-2003, Bess-Pow-Polyn-CMES.tex, Howard-Fibonacci-1985, Moll-Vignat-IJNT-2014, Thir-Nanj-1951-India, Yang-Zhen-MIA-2018-Bessel}, series expansions of the functions $I_\mu(z)I_\nu(z)$ and $[I_\nu(z)]^2$ were explicitly written out, while the series expansion of the power function $[I_\nu(z)]^r$ for $\nu\in\mathbb{C}\setminus\{-1,-2,\dotsc\}$ and $r,z\in\mathbb{C}$ was recursively formulated, where $I_\nu(z)$ denotes modified Bessel functions of the first kind.
\par
In the paper~\cite{Maclaurin-series-arccos-v3.tex}, series expansions at $z=0$ of the functions $(\arccos z)^r$ and $\bigl(\frac{\arcsin z}{z}\bigr)^r$ were established for real $r\in\mathbb{R}$. In~\cite{Taylor-arccos-v2.tex}, a series expansion at $z=1$ of the function $\bigl[\frac{(\arccos z)^{2}}{2(1-z)}\bigr]^r$ was invented for real $r\in\mathbb{R}$.
\par
For $z\in\mathbb{C}$, the functions
\begin{equation*}
\sinc z=
\begin{dcases}
\frac{\sin z}{z}, & z\ne0\\
1, & z=0
\end{dcases}
\end{equation*}
and
\begin{equation*}
\sinhc z=
\begin{dcases}
\frac{\sinh z}{z}, & z\ne0\\
1, & z=0
\end{dcases}
\end{equation*}
are called the sinc function and hyperbolic sinc function respectively. The function $\sinc z$ is also called the sine cardinal or sampling function, as well as the function $\sinhc z$ is also called hyperbolic sine cardinal, see~\cite{Sanchez-Reyes-CMJ-2012}.  The sinc function $\sinc z$ arises frequently in signal processing, the theory of the Fourier transforms, and other areas in mathematics, physics, and engineering. It is easy to see that these two functions $\sinc z$ and $\sinhc z$ are analytic on $\mathbb{C}$, that is, they are entire functions.
\par
In~\cite[Theorem~11.4]{Charalambides-book-2002} and~\cite[p.~139, Theorem~C]{Comtet-Combinatorics-74}, the Fa\`a di Bruno formula is given for $n\in\mathbb{N}$ and $z\in\mathbb{C}$ by
\begin{equation}\label{Bruno-Bell-Polynomial}
\frac{\td^n}{\td z^n}f\circ h(z)=\sum_{k=1}^nf^{(k)}(h(z)) B_{n,k}\bigl(h'(z),h''(z),\dotsc,h^{(n-k+1)}(z)\bigr),
\end{equation}
where partial Bell polynomials $B_{n,k}$ are defined for $n\ge k\in\mathbb{N}_0$ by
\begin{equation*}
B_{n,k}(z_1,z_2,\dotsc,z_{n-k+1})=\sum_{\substack{1\le i\le n-k+1\\ \ell_i\in\{0\}\cup\mathbb{N}\\ \sum_{i=1}^{n-k+1}i\ell_i=n\\
\sum_{i=1}^{n-k+1}\ell_i=k}}\frac{n!}{\prod_{i=1}^{n-k+1}\ell_i!} \prod_{i=1}^{n-k+1}\biggl(\frac{z_i}{i!}\biggr)^{\ell_i}
\end{equation*}
in~\cite[Definition~11.2]{Charalambides-book-2002} and~\cite[p.~134, Theorem~A]{Comtet-Combinatorics-74}.
\par
In this paper, with the help of the Fa\`a di Bruno formula~\eqref{Bruno-Bell-Polynomial}, in terms of central factorial numbers of the second kind $T(n,k)$, and with the terminology of the Stirling numbers of the second kind $S(n,k)$, we will derive several series expansions at $z=0$ of the positive integer power functions $\sinc^{\ell}z$ and $\sinhc^{\ell}z$ for $\ell\in\mathbb{N}$ and $z\in\mathbb{C}$, we will deduce several closed-form formulas for central factorial numbers of the second kind $T(j+\ell,\ell)$ with $j,\ell\in\mathbb{N}$ in terms of the Stirling numbers of the second kind $S(n,k)$, we will discover several closed-form formulas of specific partial Bell polynomials
\begin{equation*}
B_{n,k}\biggl(0,-\frac{1}{3},0,\frac{1}{5},\dotsc, \frac{(-1)^{n-k}}{n-k+2} \sin\frac{(n-k)\pi}{2}\biggr)
\end{equation*}
for $n\ge k\in\mathbb{N}$, we will establish series expansions at $z=0$ of the real power functions $\sinc^rz$ and $\sinhc^rz$ for $z\in\mathbb{C}$ and $r\in\mathbb{R}$, and we will present several identities for central factorial numbers of the second kind $T(n,k)$ and for the Stirling numbers of the second kind $S(n,k)$.

\section{Several series expansions of positive integer powers}

In this section, we derive several series expansions at $z=0$ of the positive integer power functions $\sinc^\ell z$ and $\sinhc^\ell z$ for $\ell\in\mathbb{N}$ and $z\in\mathbb{C}$ in terms of central factorial numbers of the second kind $T(n,k)$ and the Stirling numbers of the second kind $S(n,k)$, we deduce several closed-form formulas of $T(j+\ell,\ell)$ for $j\in\mathbb{N}_0$ and $\ell\in\mathbb{N}$ in terms of the Stirling numbers of the second kind $S(n,k)$, and we present several identities for central factorial numbers of the second kind $T(n,k)$.

\begin{thm}\label{Since-series-expan-lem}
For $\ell\in\mathbb{N}_0$ and $z\in\mathbb{C}$, we have
\begin{equation}\label{sine-power-ser-expan-eq}
\sinc^{\ell}z
=1+\sum_{j=1}^{\infty} (-1)^{j}\frac{T(\ell+2j,\ell)}{\binom{\ell+2j}{\ell}} \frac{(2z)^{2j}}{(2j)!}.
\end{equation}
\end{thm}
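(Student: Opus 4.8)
The plan is to deduce~\eqref{sine-power-ser-expan-eq} directly from the generating function~\eqref{T(n-k)-Gen-Eq} for central factorial numbers of the second kind, by the substitution that converts $\sinh$ into $\sin$.

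First I would record two facts that are immediate from~\eqref{T(n-k)-Gen-Eq}: because $2\sinh\frac z2$ is odd, the power $\bigl(2\sinh\frac z2\bigr)^\ell$ has the parity of $\ell$, so $T(n,\ell)=0$ whenever $n-\ell$ is odd; and comparing lowest-order terms, $\frac1{\ell!}\bigl(z+O(z^3)\bigr)^\ell=\frac{z^\ell}{\ell!}+\dotsb$, gives $T(\ell,\ell)=1$. (Both also follow from~\eqref{T(n-k)-EF-Eq(3.1)}.) Since $\frac1{\ell!}\bigl(2\sinh\frac z2\bigr)^\ell$ is entire, identity~\eqref{T(n-k)-Gen-Eq} is a genuine analytic identity, so I may replace $z$ by $2\ti z$. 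Using $2\sinh(\ti z)=\te^{\ti z}-\te^{-\ti z}=2\ti\sin z$, the left-hand side becomes $\frac{(2\ti)^\ell}{\ell!}\sin^\ell z$, and, keeping only the nonzero terms $n=\ell+2j$ on the right,
\begin{equation*}
\frac{(2\ti)^\ell}{\ell!}\sin^\ell z=\sum_{j=0}^{\infty}T(\ell+2j,\ell)\frac{(2\ti z)^{\ell+2j}}{(\ell+2j)!}.
\end{equation*}

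Now I divide by $(2\ti)^\ell$, use $(2\ti)^{2j}=(-4)^j=(-1)^j2^{2j}$, divide by $z^\ell$ --- legitimate as an identity between entire functions, which also installs the correct value at $z=0$ and turns $\sin^\ell z/z^\ell$ into $\sinc^\ell z$ --- multiply by $\ell!$, rewrite $\frac{\ell!}{(\ell+2j)!}=\frac1{\binom{\ell+2j}{\ell}(2j)!}$, and split off the $j=0$ term, which is $T(\ell,\ell)=1$. This is exactly~\eqref{sine-power-ser-expan-eq}, valid on all of $\mathbb C$ since it is the Taylor expansion of the entire function $\sinc^\ell z$. The case $\ell=0$ is consistent because $T(n,0)=0$ for $n\in\mathbb N$ collapses the sum to $1=\sinc^0z$. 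There is no genuine obstacle in this argument; the only thing requiring care is the justification that~\eqref{T(n-k)-Gen-Eq} may be manipulated as an analytic (not merely formal) identity---so that the substitution $z\mapsto2\ti z$ and the cancellation of $z^\ell$ are valid---together with the routine bookkeeping of the factors of $2\ti$.
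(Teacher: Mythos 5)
Your proof is correct, but it is not the route the paper takes for this particular theorem. The paper proves Theorem~\ref{Since-series-expan-lem} from the finite power-reduction formula~\eqref{sin-poer-exp}, $\sin^\ell z=\frac{(-1)^\ell}{2^\ell}\sum_{q=0}^\ell(-1)^q\binom{\ell}{q}\cos\bigl[(2q-\ell)z-\frac{\ell\pi}{2}\bigr]$, expanding each cosine as a Taylor series, splitting into the cases $\ell$ odd and $\ell$ even, and only at the end recognizing the resulting inner sums $\sum_{q}(-1)^q\binom{\ell}{q}(2q-\ell)^{n}$ as $\ell!\,2^{n}T(n,\ell)$ via the explicit formula~\eqref{T(n-k)-EF-Eq(3.1)}. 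Your argument instead starts from the generating function~\eqref{T(n-k)-Gen-Eq}, substitutes $z\mapsto2\ti z$, uses $\sinh(\ti z)=\ti\sin z$, invokes the parity vanishing $T(n,\ell)=0$ for $n-\ell$ odd and $T(\ell,\ell)=1$, and divides by $(2\ti)^\ell z^\ell$. This is essentially the proof the paper gives for its \emph{next} theorem (the one establishing~\eqref{QGWSID3O} and~\eqref{sinh-power-ser}), which re-derives~\eqref{sine-power-ser-expan-eq} as a byproduct by exactly your substitution $z\mapsto z\ti$. So your route is legitimate and in fact appears in the paper, just attached to a different theorem. The trade-off: your generating-function argument is shorter and makes the appearance of $T(\ell+2j,\ell)$ transparent rather than a final identification, and it cleanly handles $\ell=0$; the paper's longer computation via~\eqref{sin-poer-exp} has the side benefit of producing the explicit double-sum forms of the coefficients, which are then reused verbatim inside the proof of Theorem~\ref{part-Bell-sine-thm}, and of yielding the vanishing/normalization corollary for $T(2j-1,2\ell-1)$ and $T(2j,2\ell)$ directly from the same calculation. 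Your care in justifying the substitution and the division by $z^\ell$ as identities between entire functions is exactly the right point to flag; no gap remains.
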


\begin{proof}
For $\ell\in\mathbb{N}$, the formula
\begin{equation}\label{sin-poer-exp}
\sin^\ell z=\frac{(-1)^\ell}{2^\ell}\sum_{q=0}^\ell(-1)^q\binom{\ell}{q} \cos\biggl[(2q-\ell)z-\frac{\ell\pi}2\biggr]
\end{equation}
is given in~\cite[Corollary~2.1]{Wallis-Ratio-Sum.tex}.
Applying the identity
\begin{equation*}
\cos(z-y)=\cos z\cos y+\sin z\sin y
\end{equation*}
to the formula~\eqref{sin-poer-exp} leads to
\begin{align*}
\sin^\ell z&=\frac{(-1)^\ell}{2^\ell}\sum_{q=0}^\ell(-1)^q\binom{\ell}{q} \biggl(\cos[(2q-\ell)z]\cos\frac{\ell\pi}2 +\sin[(2q-\ell)z]\sin\frac{\ell\pi}2\biggr)\\
&=\frac{(-1)^\ell}{2^\ell}\cos\frac{\ell\pi}2\sum_{q=0}^\ell(-1)^q\binom{\ell}{q} \Biggl[1+\sum_{j=1}^{\infty}(-1)^j(2q-\ell)^{2j}\frac{z^{2j}}{(2j)!}\Biggr] \\
&\quad+\frac{(-1)^\ell}{2^\ell} \sin\frac{\ell\pi}2 \sum_{q=0}^\ell(-1)^q\binom{\ell}{q} \sum_{j=0}^{\infty}(-1)^j(2q-\ell)^{2j+1}\frac{z^{2j+1}}{(2j+1)!}\\
&=\frac{(-1)^\ell}{2^\ell}\cos\frac{\ell\pi}2\sum_{j=1}^{\infty}(-1)^j \Biggl[\sum_{q=0}^\ell(-1)^q\binom{\ell}{q} (2q-\ell)^{2j}\Biggr]\frac{z^{2j}}{(2j)!} \\
&\quad+\frac{(-1)^\ell}{2^\ell} \sin\frac{\ell\pi}2 \sum_{j=0}^{\infty}(-1)^j \Biggl[\sum_{q=0}^\ell(-1)^q\binom{\ell}{q} (2q-\ell)^{2j+1}\Biggr]\frac{z^{2j+1}}{(2j+1)!}.
\end{align*}
Replacing $\ell$ by $2\ell-1$ and by $2\ell$ and simplifying result in the series expansions
\begin{equation}\label{sin-poer-expansion-odd}
\sin^{2\ell-1}z=\frac{(-1)^\ell}{2^{2\ell-1}} \sum_{j=\ell}^{\infty}(-1)^{j-1} \Biggl[\sum_{q=0}^{2\ell-1}(-1)^q\binom{{2\ell-1}}{q} (2q-2\ell+1)^{2j-1}\Biggr]\frac{z^{2j-1}}{(2j-1)!}
\end{equation}
and
\begin{equation}\label{sin-poer-expansion-even}
\sin^{2\ell} z=\frac{(-1)^\ell}{2^{2\ell}}\sum_{j=\ell}^{\infty}(-1)^j2^{2j} \Biggl[\sum_{q=0}^{2\ell}(-1)^q\binom{{2\ell}}{q} (q-\ell)^{2j}\Biggr]\frac{z^{2j}}{(2j)!}.
\end{equation}
The series expansions~\eqref{sin-poer-expansion-odd} and~\eqref{sin-poer-expansion-even} can be reformulated as
\begin{equation*}
\sinc^{2\ell-1}z
=\frac{1}{2^{2\ell-1}} \sum_{j=0}^{\infty}\frac{(-1)^{j-1}}{(2\ell+2j-1)!} \Biggl[\sum_{q=0}^{2\ell-1}(-1)^q\binom{{2\ell-1}}{q} (2q-2\ell+1)^{2\ell+2j-1}\Biggr]z^{2j}
\end{equation*}
and
\begin{equation*}
\sinc^{2\ell}z
=\frac{1}{2^{2\ell}}\sum_{j=0}^{\infty}\frac{(-1)^{j}}{(2\ell+2j)!} \Biggl[\sum_{q=0}^{2\ell}(-1)^q\binom{{2\ell}}{q} (2q-2\ell)^{2\ell+2j}\Biggr]z^{2j}.
\end{equation*}
for $\ell\in\mathbb{N}$ and $z\in\mathbb{C}$. These two series expansions can be unified and rearranged as the series expansion~\eqref{sine-power-ser-expan-eq}. Theorem~\ref{Since-series-expan-lem} is thus proved.
\end{proof}

\begin{cor}
For $\ell\in\mathbb{N}$, we have
\begin{equation*}
T(2j-1,2\ell-1)
=
\begin{dcases}
0, & 1\le j\le\ell-1\\
1, & j=\ell
\end{dcases}
\end{equation*}
and
\begin{equation*}
T(2j,2\ell)=
\begin{dcases}
0, & 1\le j\le\ell-1\\
1, & j=\ell
\end{dcases}
\end{equation*}
\end{cor}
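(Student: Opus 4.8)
The plan is to read both identities off the intermediate series expansions~\eqref{sin-poer-expansion-odd} and~\eqref{sin-poer-expansion-even} established inside the proof of Theorem~\ref{Since-series-expan-lem}, taken one step earlier, that is, with the $j$-summations still starting at $j=1$ rather than at $j=\ell$ --- for the terms with $j<\ell$ are precisely the ones we want to prove vanish. At that stage, for every $j\in\mathbb{N}$ the Maclaurin coefficient of $z^{2j-1}$ in $\sin^{2\ell-1}z$ equals $\frac{(-1)^{\ell}}{2^{2\ell-1}}\frac{(-1)^{j-1}}{(2j-1)!}\sum_{q=0}^{2\ell-1}(-1)^{q}\binom{2\ell-1}{q}(2q-2\ell+1)^{2j-1}$, and the coefficient of $z^{2j}$ in $\sin^{2\ell}z$ equals $\frac{(-1)^{\ell}}{2^{2\ell}}\frac{(-1)^{j}2^{2j}}{(2j)!}\sum_{q=0}^{2\ell}(-1)^{q}\binom{2\ell}{q}(q-\ell)^{2j}$. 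I would also record the trivial fact that $\sin^{m}z=z^{m}+O(z^{m+2})$ as $z\to0$ for every $m\in\mathbb{N}$, since $\sin z=z+O(z^{3})$.

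The key step is to recognize the two inner sums as nonzero multiples of central factorial numbers of the second kind by means of~\eqref{T(n-k)-EF-Eq(3.1)} (with the summation index there renamed to $q$). Replacing $\ell$ by $2\ell-1$ and $n$ by $2j-1$ in~\eqref{T(n-k)-EF-Eq(3.1)} and using $\bigl(\frac{2\ell-1}{2}-q\bigr)^{2j-1}=-2^{-(2j-1)}(2q-2\ell+1)^{2j-1}$ gives $\sum_{q=0}^{2\ell-1}(-1)^{q}\binom{2\ell-1}{q}(2q-2\ell+1)^{2j-1}=-(2\ell-1)!\,2^{2j-1}\,T(2j-1,2\ell-1)$; replacing $\ell$ by $2\ell$ and $n$ by $2j$ and using $\bigl(\ell-q\bigr)^{2j}=(q-\ell)^{2j}$ gives $\sum_{q=0}^{2\ell}(-1)^{q}\binom{2\ell}{q}(q-\ell)^{2j}=(2\ell)!\,T(2j,2\ell)$. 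Substituting these back, the coefficient of $z^{2j-1}$ in $\sin^{2\ell-1}z$ becomes $(-1)^{\ell+j}4^{\,j-\ell}\frac{(2\ell-1)!}{(2j-1)!}\,T(2j-1,2\ell-1)$ and the coefficient of $z^{2j}$ in $\sin^{2\ell}z$ becomes $(-1)^{\ell+j}4^{\,j-\ell}\frac{(2\ell)!}{(2j)!}\,T(2j,2\ell)$; in both cases the prefactor is never zero and equals $1$ when $j=\ell$. Comparing with $\sin^{2\ell-1}z=z^{2\ell-1}+O(z^{2\ell+1})$ and $\sin^{2\ell}z=z^{2\ell}+O(z^{2\ell+2})$ then forces $T(2j-1,2\ell-1)=T(2j,2\ell)=0$ for $1\le j\le\ell-1$ and $T(2\ell-1,2\ell-1)=T(2\ell,2\ell)=1$, which is the assertion.

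I do not anticipate a genuine obstacle; the only point demanding care is the bookkeeping of the powers of $2$ and of the signs $(-1)^{\ell}$, $\cos\frac{\ell\pi}{2}$, $\sin\frac{\ell\pi}{2}$, together with the index shift $2j+1\mapsto2j-1$ in the odd case. If one prefers to avoid the internal steps of the proof of Theorem~\ref{Since-series-expan-lem}, there is a one-line alternative straight from the generating function~\eqref{T(n-k)-Gen-Eq}: since $2\sinh\frac z2$ is an odd entire function with $2\sinh\frac z2=z+O(z^{3})$, the function $\bigl(2\sinh\frac z2\bigr)^{\ell}$ has the parity of $\ell$ and lowest-order term $z^{\ell}$, so~\eqref{T(n-k)-Gen-Eq} yields at once $T(n,\ell)=0$ for $n<\ell$ and $T(\ell,\ell)=1$; specializing $(n,\ell)$ to $(2j-1,2\ell-1)$ and to $(2j,2\ell)$ reproduces both displayed formulas.
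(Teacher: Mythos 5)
Your proposal is correct and follows essentially the same route as the paper: the paper's proof of this corollary is the single line ``This follows from Theorem~\ref{Since-series-expan-lem} and its proof,'' and what you have written is precisely the intended argument spelled out --- reading the coefficients off the intermediate expansions~\eqref{sin-poer-expansion-odd} and~\eqref{sin-poer-expansion-even} before the summation is truncated to $j\ge\ell$, identifying the inner sums with $T(2j-1,2\ell-1)$ and $T(2j,2\ell)$ via~\eqref{T(n-k)-EF-Eq(3.1)}, and comparing against $\sin^m z=z^m+O\bigl(z^{m+2}\bigr)$. Your sign and power-of-$2$ bookkeeping checks out, and the closing generating-function shortcut via~\eqref{T(n-k)-Gen-Eq} is also valid.
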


\begin{proof}
This follows from Theorem~\ref{Since-series-expan-lem} and its proof.
\end{proof}

\begin{thm}
For $j,\ell\in\mathbb{N}_0$, we have
\begin{equation}\label{QGWSID3O}
T(2j+\ell+1,\ell)=0.
\end{equation}
For $\ell\in\mathbb{N}_0$ and $z\in\mathbb{C}$, the series expansions
\begin{equation}\label{sinh-power-ser}
\sinhc^\ell z=1+\sum_{j=1}^{\infty}\frac{T(2j+\ell,\ell)}{\binom{2j+\ell}{\ell}}\frac{(2z)^{2j}}{(2j)!}
\end{equation}
and~\eqref{sine-power-ser-expan-eq} are valid.
\end{thm}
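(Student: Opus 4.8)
The plan is to read the expansion~\eqref{sinh-power-ser} straight off the generating function~\eqref{T(n-k)-Gen-Eq}, to get the vanishing~\eqref{QGWSID3O} as a parity fact about the central factorial numbers, and to recover~\eqref{sine-power-ser-expan-eq} either from Theorem~\ref{Since-series-expan-lem} or by the substitution $z\mapsto\ti z$.

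First I would establish a parity identity for $T(n,\ell)$. Replacing the summation index $j$ by $\ell-j$ in~\eqref{T(n-k)-EF-Eq(3.1)} gives
\[
T(n,\ell)=\frac1{\ell!}\sum_{j=0}^\ell(-1)^{\ell-j}\binom{\ell}{j}\biggl(j-\frac\ell2\biggr)^n=(-1)^{n+\ell}\,\frac1{\ell!}\sum_{j=0}^\ell(-1)^{j}\binom{\ell}{j}\biggl(\frac\ell2-j\biggr)^n=(-1)^{n+\ell}T(n,\ell),
\]
so that $T(n,\ell)=0$ whenever $n+\ell$ is odd; taking $n=2j+\ell+1$ yields~\eqref{QGWSID3O}. (Equivalently: $2\sinh\frac z2$ is an odd function of $z$, so its $\ell$th power involves only powers $z^n$ with $n\equiv\ell\pmod 2$.)

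Next I would replace $z$ by $2z$ in~\eqref{T(n-k)-Gen-Eq} and multiply both sides by $\ell!/(2z)^\ell$, obtaining for $z\ne0$
\[
\sinhc^\ell z=\frac{(\sinh z)^\ell}{z^\ell}=\frac{\ell!}{(2z)^\ell}\sum_{n=\ell}^\infty T(n,\ell)\frac{(2z)^n}{n!}=\sum_{m=0}^\infty\frac{T(\ell+m,\ell)}{\binom{\ell+m}{\ell}}\frac{(2z)^m}{m!},
\]
where the last step substitutes $n=\ell+m$ and uses $\binom{\ell+m}{\ell}\,m!=(\ell+m)!/\ell!$; the identity extends to $z=0$ because both sides are entire. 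Now~\eqref{QGWSID3O} annihilates every term with $m$ odd, and among the surviving terms $m=2j$ the one with $j=0$ equals $T(\ell,\ell)/\binom{\ell}{\ell}=T(\ell,\ell)=1$ (the leading coefficient in~\eqref{T(n-k)-Gen-Eq}, with the convention $T(0,0)=1$). This is exactly~\eqref{sinh-power-ser}.

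Finally,~\eqref{sine-power-ser-expan-eq} has already been proved as Theorem~\ref{Since-series-expan-lem}; it also follows from~\eqref{sinh-power-ser} upon observing that $\sin z=-\ti\sinh(\ti z)$ forces $\sinc z=\sinhc(\ti z)$, so replacing $z$ by $\ti z$ in~\eqref{sinh-power-ser} and using $(2\ti z)^{2j}=(-1)^j(2z)^{2j}$ reproduces the alternating form. I do not foresee a genuine obstacle here: the only mildly delicate points are the legitimacy of dividing the power series by $z^\ell$ (harmless, since everything in sight is entire) and the bookkeeping in the reindexing $n\mapsto\ell+m\mapsto\ell+2j$.
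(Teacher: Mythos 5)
Your proof is correct and follows essentially the same route as the paper: substitute $z\mapsto 2z$ in the generating function~\eqref{T(n-k)-Gen-Eq}, divide by $(2z)^\ell$, discard the odd-index terms by parity, and recover~\eqref{sine-power-ser-expan-eq} via $z\mapsto \ti z$. The only cosmetic difference is that the paper deduces~\eqref{QGWSID3O} from the evenness of $\frac{\sinh z}{z}$, whereas you prove it directly from the explicit formula~\eqref{T(n-k)-EF-Eq(3.1)} by the reindexing $j\mapsto\ell-j$ --- an equivalent argument you yourself note parenthetically.
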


\begin{proof}
Replacing $z$ by $2z$ in~\eqref{T(n-k)-Gen-Eq} and rearranging yield
\begin{equation*}
\biggl(\frac{\sinh z}{z}\biggr)^\ell=1+\sum_{n=1}^{\infty}\frac{T(n+\ell,\ell)}{\binom{n+\ell}{\ell}}\frac{(2z)^{n}}{n!}.
\end{equation*}
Considering that the function $\frac{\sinh x}{x}$ is even on $\mathbb{R}$, we conclude that the identity~\eqref{QGWSID3O} and the series~\eqref{sinh-power-ser} are valid.
\par
Substituting $z\operatorname{i}$ for $z$ in~\eqref{sinh-power-ser} and employing the relation $\sinh(z\operatorname{i})=\operatorname{i}\sin z$ give the series expansion~\eqref{sine-power-ser-expan-eq} in Theorem~\ref{Since-series-expan-lem}.
\end{proof}

\begin{thm}\label{sinc-power-series-2nd-thm}
For $j,\ell\in\mathbb{N}_0$ and $z\in\mathbb{C}$, we have
\begin{equation}\label{T-S(n-k)=0}
\sum_{k=0}^{2j+1}(-1)^k\binom{2j+1}{k}\biggl(\frac{2}{\ell}\biggr)^k \frac{S(k+\ell,\ell)} {\binom{k+\ell}{\ell}}=0
\end{equation}
and
\begin{equation}\label{sinc-power-series-2nd-Eq}
\sinc^\ell z=1+\sum_{j=1}^{\infty}(-1)^{j} \Biggl[\sum_{k=0}^{2j}(-1)^k\binom{2j}{k}\biggl(\frac{2}{\ell}\biggr)^k \frac{S(k+\ell,\ell)} {\binom{k+\ell}{\ell}}\Biggr] \frac{(\ell z)^{2j}}{(2j)!}.
\end{equation}
\end{thm}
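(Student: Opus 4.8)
The plan is a generating-function argument running parallel to the derivations above, but now feeding in the classical generating function of the Stirling numbers of the second kind in place of that of the central factorial numbers. Putting $r=0$ in~\eqref{S(n=k=x)-dfn} gives $\frac{(\te^w-1)^\ell}{\ell!}=\sum_{n=\ell}^\infty S(n,\ell)\frac{w^n}{n!}$; dividing by $w^\ell$ and reindexing by $m=n-\ell$ yields, for every $w\in\mathbb{C}$,
\begin{equation*}
\biggl(\frac{\te^w-1}{w}\biggr)^{\ell}=\sum_{m=0}^{\infty}\frac{S(m+\ell,\ell)}{\binom{m+\ell}{\ell}}\frac{w^m}{m!},
\end{equation*}
since $\binom{m+\ell}{\ell}=\frac{(m+\ell)!}{m!\,\ell!}$ and the left-hand side is entire. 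This is the only ingredient to be imported.

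Next I would use the elementary factorization $2\ti\sin z=\te^{\ti z}-\te^{-\ti z}=\te^{-\ti z}\bigl(\te^{2\ti z}-1\bigr)$ to write, for $\ell\in\mathbb{N}$,
\begin{equation*}
\sinc^\ell z=\biggl(\frac{\sin z}{z}\biggr)^{\ell}=\te^{-\ti\ell z}\biggl(\frac{\te^{2\ti z}-1}{2\ti z}\biggr)^{\ell}.
\end{equation*}
Substituting $w=2\ti z$ into the displayed generating function and multiplying the result by the exponential series $\te^{-\ti\ell z}=\sum_{p=0}^\infty\frac{(-\ti\ell z)^p}{p!}$ --- a Cauchy product of two power series that converge absolutely on all of $\mathbb{C}$, hence legitimate --- I would read off the coefficient of $z^n$. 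After the bookkeeping $(2\ti)^k(-\ti\ell)^{n-k}=(-\ti)^n(-1)^k2^k\ell^{n-k}$, that coefficient simplifies to
\begin{equation*}
(-\ti)^n\sum_{k=0}^{n}(-1)^k\frac{2^k\ell^{n-k}}{k!\,(n-k)!}\frac{S(k+\ell,\ell)}{\binom{k+\ell}{\ell}}=(-\ti)^n\frac{\ell^n}{n!}\sum_{k=0}^{n}(-1)^k\binom{n}{k}\biggl(\frac2\ell\biggr)^k\frac{S(k+\ell,\ell)}{\binom{k+\ell}{\ell}}.
\end{equation*}

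To conclude, I would invoke that $\sinc^\ell z$ is an even entire function, so the coefficient above vanishes for every odd $n=2j+1$; since $(-\ti)^{2j+1}\ne0$ and $\ell\ne0$, this is exactly~\eqref{T-S(n-k)=0}. For even $n=2j$ one has $(-\ti)^{2j}=(-1)^j$, so the surviving coefficients assemble into~\eqref{sinc-power-series-2nd-Eq}, with the $j=0$ contribution collapsing to $1$ because its only summand ($k=0$) equals $1$; the degenerate case $\ell=0$ reduces to the trivial identity $1=1$, so one may assume $\ell\in\mathbb{N}$. The only points demanding care --- the four powers of $\ti$ hidden in $(2\ti)^k(-\ti\ell)^{n-k}$ and the validity of the Cauchy product --- are entirely routine, so I do not expect a genuine obstacle. (As a byproduct, comparing~\eqref{sinc-power-series-2nd-Eq} with~\eqref{sine-power-ser-expan-eq} expresses $T(\ell+2j,\ell)$ through the Stirling numbers $S(k+\ell,\ell)$.)
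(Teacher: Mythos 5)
Your proposal is correct and follows essentially the same route as the paper: take $r=0$ in~\eqref{S(n=k=x)-dfn}, factor $\sin z=\frac{\te^{2z\ti}-1}{2\ti}\te^{-z\ti}$, form the Cauchy product, and read off the coefficient of $z^n$, with the odd-degree coefficients forced to vanish (the paper argues via the vanishing of the imaginary part, you via evenness of $\sinc^\ell z$ --- the same structural fact). No gaps; your handling of the degenerate case $\ell=0$ is if anything slightly more careful than the paper's.
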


\begin{proof}
Taking $r=0$ in~\eqref{S(n=k=x)-dfn} and reformulating give
\begin{equation}\label{S(nk)-dfn}
\biggl(\frac{\te^z-1}{z}\biggr)^k=\sum_{n=0}^\infty \frac{S(n+k,k)}{\binom{n+k}{k}}\frac{z^{n}}{n!}.
\end{equation}
Since
\begin{equation*}
\sin z=\frac{\te^{z\ti}-\te^{-z\ti}}{2\ti}=\frac{\te^{2z\ti}-1}{2\ti}\te^{-z\ti},
\end{equation*}
by~\eqref{S(nk)-dfn} and the Cauchy product of two series, we obtain
\begin{align*}
\sinc^\ell z&=\biggl(\frac{\sin z}{z}\biggr)^\ell
=\biggl(\frac{\te^{2z\ti}-1}{2z\ti}\biggr)^\ell\te^{-\ell z\ti}\\
&=\Biggl[\sum_{n=0}^\infty \frac{S(n+\ell,\ell)}{\binom{n+\ell}{\ell}}\frac{(2z\ti)^{n}}{n!}\Biggr] \Biggl[\sum_{n=0}^{\infty}\frac{(-\ell z\ti)^n}{n!}\Biggr]\\
&=\sum_{j=0}^{\infty}\Biggl[\sum_{n=0}^{j}\frac{S(n+\ell,\ell)}{\binom{n+\ell}{\ell}}\frac{(2\ti)^{n}}{n!} \frac{(-\ell \ti)^{j-n}}{(j-n)!}\Biggr]z^j\\
&=\sum_{j=0}^{\infty}(-1)^j\frac{\ell^j}{j!} \Biggl[\sum_{k=0}^{j}(-1)^k\binom{j}{k}\biggl(\frac{2}{\ell}\biggr)^k \frac{S(k+\ell,\ell)} {\binom{k+\ell}{\ell}}\Biggr]\biggl(\cos\frac{j\pi}2+\ti\sin\frac{j\pi}2\biggr)z^j\\
&=\sum_{j=0}^{\infty}(-1)^{j}\frac{\ell^{2j}}{(2j)!} \Biggl[\sum_{k=0}^{2j}(-1)^k\binom{2j}{k}\biggl(\frac{2}{\ell}\biggr)^k \frac{S(k+\ell,\ell)} {\binom{k+\ell}{\ell}}\Biggr]z^{2j}\\
&\quad+\ti\sum_{j=1}^{\infty}(-1)^{j}\frac{\ell^{2j-1}}{(2j-1)!} \Biggl[\sum_{k=0}^{2j-1}(-1)^k\binom{2j-1}{k}\biggl(\frac{2}{\ell}\biggr)^k \frac{S(k+\ell,\ell)} {\binom{k+\ell}{\ell}}\Biggr]z^{2j-1}\\
&=\sum_{j=0}^{\infty}(-1)^{j}\frac{\ell^{2j}}{(2j)!} \Biggl[\sum_{k=0}^{2j}(-1)^k\binom{2j}{k}\biggl(\frac{2}{\ell}\biggr)^k \frac{S(k+\ell,\ell)} {\binom{k+\ell}{\ell}}\Biggr]z^{2j}.
\end{align*}
The proof of Theorem~\ref{sinc-power-series-2nd-thm} is complete.
\end{proof}

\begin{cor}
For $j\in\mathbb{N}_0$ and $\ell\in\mathbb{N}$, we have
\begin{equation}\label{T(j+ell-ell)}
\frac{T(2j+\ell,\ell)}{\binom{2j+\ell}{\ell}}
=\sum_{m=0}^{2j}(-1)^{m}\binom{2j}{m} \biggl(\frac{\ell}{2}\biggr)^{m} \frac{S(2j+\ell-m,\ell)} {\binom{2j+\ell-m}{\ell}}.
\end{equation}
\end{cor}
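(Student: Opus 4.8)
The plan is to derive \eqref{T(j+ell-ell)} simply by matching the two Maclaurin expansions of $\sinc^\ell z$ that have already been obtained: the expansion \eqref{sine-power-ser-expan-eq} of Theorem~\ref{Since-series-expan-lem}, whose coefficients display the central factorial numbers $T(2j+\ell,\ell)$, and the expansion \eqref{sinc-power-series-2nd-Eq} of Theorem~\ref{sinc-power-series-2nd-thm}, whose coefficients display the Stirling numbers $S(k+\ell,\ell)$. Since $\sinc^\ell z$ is an entire function of $z$, its power series at $z=0$ is unique, so for each $j$ the coefficient of $z^{2j}$ must be the same in both expansions.

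First I would read off the coefficient of $z^{2j}$ in each expansion. From \eqref{sine-power-ser-expan-eq} it equals $(-1)^{j}\dfrac{T(2j+\ell,\ell)}{\binom{2j+\ell}{\ell}}\dfrac{2^{2j}}{(2j)!}$, and from \eqref{sinc-power-series-2nd-Eq} it equals $(-1)^{j}\dfrac{\ell^{2j}}{(2j)!}\sum_{k=0}^{2j}(-1)^{k}\binom{2j}{k}\bigl(\frac{2}{\ell}\bigr)^{k}\dfrac{S(k+\ell,\ell)}{\binom{k+\ell}{\ell}}$. Equating these and cancelling the common factor $\dfrac{(-1)^{j}}{(2j)!}$ yields, for $\ell\in\mathbb{N}$ (so that the divisions by $\ell$ and by $\binom{k+\ell}{\ell}$ are legitimate) and $j\in\mathbb{N}_0$,
\[
\frac{T(2j+\ell,\ell)}{\binom{2j+\ell}{\ell}}
=\sum_{k=0}^{2j}(-1)^{k}\binom{2j}{k}\biggl(\frac{\ell}{2}\biggr)^{2j-k}\frac{S(k+\ell,\ell)}{\binom{k+\ell}{\ell}} .
\]

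Finally I would reverse the order of summation by the substitution $m=2j-k$; using $(-1)^{2j-m}=(-1)^{m}$, $\binom{2j}{2j-m}=\binom{2j}{m}$, and $S(k+\ell,\ell)=S(2j+\ell-m,\ell)$, the right-hand side turns into $\sum_{m=0}^{2j}(-1)^{m}\binom{2j}{m}\bigl(\frac{\ell}{2}\bigr)^{m}\dfrac{S(2j+\ell-m,\ell)}{\binom{2j+\ell-m}{\ell}}$, which is exactly \eqref{T(j+ell-ell)}. I do not anticipate any genuine obstacle in this argument: the reasoning is just an appeal to uniqueness of Taylor coefficients followed by a relabelling of the summation index. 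The only points worth a moment's care are that $\ell\ge1$ is needed for the formula to make sense, and the degenerate case $j=0$, where both sides collapse to $T(\ell,\ell)/\binom{\ell}{\ell}=S(\ell,\ell)/\binom{\ell}{\ell}=1$, so the identity holds there too.
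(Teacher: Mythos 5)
Your proposal is correct and is essentially the paper's own proof: the authors likewise obtain \eqref{T(j+ell-ell)} by comparing the coefficient of $z^{2j}$ in \eqref{sine-power-ser-expan-eq} with that in \eqref{sinc-power-series-2nd-Eq} and simplifying. You merely make explicit the details the paper leaves implicit, namely the appeal to uniqueness of Maclaurin coefficients, the reindexing $m=2j-k$, and the degenerate case $j=0$.
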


\begin{proof}
This follows from comparing the series expansion~\eqref{sine-power-ser-expan-eq} in Theorem~\ref{Since-series-expan-lem} with the series expansion~\eqref{sinc-power-series-2nd-Eq} in Theorem~\ref{sinc-power-series-2nd-thm} and simplifying.
\end{proof}

\begin{cor}
For $j\in\mathbb{N}_0$ and $\ell\in\mathbb{N}$, we have
\begin{equation}\label{T(j+ell-ell)-j}
\frac{T(j+\ell,\ell)}{\binom{j+\ell}{\ell}}
=\sum_{m=0}^{j}(-1)^{m}\binom{j}{m} \biggl(\frac{\ell}{2}\biggr)^{m} \frac{S(j+\ell-m,\ell)} {\binom{j+\ell-m}{\ell}}.
\end{equation}
\end{cor}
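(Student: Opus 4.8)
The plan is to prove \eqref{T(j+ell-ell)-j} by a short case distinction on the parity of $j$, using only results already established. If $j=2p$ is even, then \eqref{T(j+ell-ell)-j} is literally the statement \eqref{T(j+ell-ell)} with $j$ replaced by $p$, so nothing remains to be done. The work is entirely in the odd case $j=2p+1$, and there I claim that \emph{both sides vanish}. On the left, $\frac{T(2p+\ell+1,\ell)}{\binom{2p+\ell+1}{\ell}}$ is zero because $T(2p+\ell+1,\ell)=0$ by the identity \eqref{QGWSID3O}. On the right, I would reindex the sum by setting $k=2p+1-m$; using $\binom{2p+1}{2p+1-k}=\binom{2p+1}{k}$, $(-1)^{2p+1-k}=-(-1)^k$, and $\bigl(\frac{\ell}{2}\bigr)^{2p+1-k}=\bigl(\frac{\ell}{2}\bigr)^{2p+1}\bigl(\frac{2}{\ell}\bigr)^{k}$, the right-hand side of \eqref{T(j+ell-ell)-j} becomes $-\bigl(\frac{\ell}{2}\bigr)^{2p+1}\sum_{k=0}^{2p+1}(-1)^k\binom{2p+1}{k}\bigl(\frac{2}{\ell}\bigr)^{k}\frac{S(k+\ell,\ell)}{\binom{k+\ell}{\ell}}$, which is $0$ by \eqref{T-S(n-k)=0}. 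Hence the two sides agree for odd $j$ as well, and \eqref{T(j+ell-ell)-j} holds for all $j\in\mathbb{N}_0$.

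An alternative, parity-free route is to rerun the Cauchy-product computation from the proof of Theorem~\ref{sinc-power-series-2nd-thm} with $\sinc$ replaced by $\sinhc$. Writing $\sinh z=\frac{\te^{2z}-1}{2}\te^{-z}$ gives $\sinhc^{\ell}z=\bigl(\frac{\te^{2z}-1}{2z}\bigr)^{\ell}\te^{-\ell z}$, and multiplying the expansion \eqref{S(nk)-dfn} (with $z$ replaced by $2z$) by $\te^{-\ell z}=\sum_{n\ge0}\frac{(-\ell z)^n}{n!}$ yields $\sinhc^{\ell}z=\sum_{j=0}^{\infty}\frac{(-\ell)^j}{j!}\bigl[\sum_{n=0}^{j}(-1)^n\binom{j}{n}\bigl(\frac{2}{\ell}\bigr)^n\frac{S(n+\ell,\ell)}{\binom{n+\ell}{\ell}}\bigr]z^j$. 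On the other hand, replacing $z$ by $2z$ in \eqref{T(n-k)-Gen-Eq} and rearranging gives $\sinhc^{\ell}z=\sum_{j=0}^{\infty}\frac{T(j+\ell,\ell)}{\binom{j+\ell}{\ell}}\frac{(2z)^j}{j!}$. Equating the coefficients of $z^j$ and then substituting $m=j-n$ produces \eqref{T(j+ell-ell)-j} at once for every $j\in\mathbb{N}_0$; this derivation also makes transparent why \eqref{QGWSID3O} is true, since $\sinhc$ is an even function and its odd Maclaurin coefficients must vanish.

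I do not expect a genuine obstacle here. The only point requiring a little care in either route is the bookkeeping of the signs and of the powers of $2$ and $\ell$ under the reindexing $m\leftrightarrow j-n$ (equivalently $k\leftrightarrow 2p+1-m$), together with citing the right instances of \eqref{QGWSID3O} and \eqref{T-S(n-k)=0}; everything else is routine.
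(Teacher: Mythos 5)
Your first argument is exactly the paper's proof: the paper's one-line justification ``combine \eqref{QGWSID3O}, \eqref{T-S(n-k)=0}, and \eqref{T(j+ell-ell)}'' is precisely the parity case distinction you spell out, and your reindexing $k=2p+1-m$ in the odd case (picking up the factor $-\bigl(\frac{\ell}{2}\bigr)^{2p+1}$ and reducing to \eqref{T-S(n-k)=0}) is correct. Your alternative Cauchy-product route via $\sinhc^{\ell}z=\bigl(\frac{\te^{2z}-1}{2z}\bigr)^{\ell}\te^{-\ell z}$ is also valid and gives the identity for all $j$ in one stroke without any parity split, but it is a bonus rather than a necessity here.
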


\begin{proof}
This follows from combining the identities~\eqref{QGWSID3O}, \eqref{T-S(n-k)=0}, and~\eqref{T(j+ell-ell)}.
\end{proof}

\section{Closed-form formulas for specific partial Bell polynomials}

In this section, with the help of Theorem~\ref{Since-series-expan-lem} and other results in the above section, we establish several closed-form formulas for specific partial Bell polynomials $B_{n,k}$ of all derivatives at $z=0$ of the sinc function $\sinc z$.

\begin{thm}\label{part-Bell-sine-thm}
For $n\ge k\ge1$ and $m\in\mathbb{N}$, partial Bell polynomials $B_{n,k}$ satisfy
\begin{equation*}
B_{2m-1,k}\biggl(0,-\frac{1}{3},0,\frac{1}{5},\dotsc, \frac{(-1)^{m}}{2m-k+1}\cos\frac{k\pi}{2}\biggr)=0
\end{equation*}
and
\begin{multline*}
B_{2m,k}\biggl(0,-\frac{1}{3},0,\frac{1}{5},\dotsc, \frac{(-1)^m}{2m-k+2}\sin\frac{k\pi}{2}\biggr)\\*
=(-1)^{m+k}\frac{2^{2m}}{k!}\sum_{j=1}^k(-1)^j\binom{k}{j} \frac{T(2m+j,j)}{\binom{2m+j}{j}}.
\end{multline*}
\end{thm}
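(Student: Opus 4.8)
The plan is to recognize the arguments of the partial Bell polynomials as the successive derivatives of $\sinc$ at $z=0$ and then to exploit the standard exponential generating function for $B_{n,k}$. First I would record that, since $\sinc z=\sum_{i=0}^{\infty}\frac{(-1)^i}{(2i+1)!}z^{2i}$, its derivatives at the origin are $\sinc^{(2t)}(0)=\frac{(-1)^t}{2t+1}$ and $\sinc^{(2t+1)}(0)=0$. A short parity check then identifies the two displayed argument lists with $\bigl(\sinc'(0),\sinc''(0),\dotsc\bigr)$: in $B_{2m-1,k}$ the last slot is the $(2m-k)$-th derivative, and when $2m-k=2t$ one has $\frac{(-1)^t}{2t+1}=\frac{(-1)^m}{2m-k+1}\cos\frac{k\pi}{2}$ because then $k$ is even and $\cos\frac{k\pi}{2}=(-1)^{m-t}$, while when $2m-k$ is odd both sides vanish; likewise in $B_{2m,k}$ the last slot is the $(2m-k+1)$-st derivative and matches $\frac{(-1)^m}{2m-k+2}\sin\frac{k\pi}{2}$ by the analogous computation with $k$ odd.

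Next I would invoke the generating function $\frac{1}{k!}\bigl(\sum_{i\ge1}x_i\frac{t^i}{i!}\bigr)^k=\sum_{n\ge k}B_{n,k}(x_1,\dotsc,x_{n-k+1})\frac{t^n}{n!}$, which is immediate from the definition of $B_{n,k}$ quoted in the introduction, and apply it with $x_i=\sinc^{(i)}(0)$. Since $\sum_{i\ge1}\sinc^{(i)}(0)\frac{t^i}{i!}=\sinc t-1$, this reads $\frac{(\sinc t-1)^k}{k!}=\sum_{n\ge k}B_{n,k}(\dotsc)\frac{t^n}{n!}$, so that $B_{n,k}(\dotsc)=\frac{n!}{k!}[t^n](\sinc t-1)^k$ and the whole problem reduces to computing the Maclaurin coefficients of $(\sinc t-1)^k$.

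Then I would expand $(\sinc t-1)^k=\sum_{j=0}^{k}(-1)^{k-j}\binom{k}{j}\sinc^j t$ by the binomial theorem and substitute the expansion $\sinc^j t=1+\sum_{i=1}^{\infty}(-1)^i\frac{T(j+2i,j)}{\binom{j+2i}{j}}\frac{(2t)^{2i}}{(2i)!}$ supplied by Theorem~\ref{Since-series-expan-lem}. The constant terms cancel because $\sum_{j=0}^{k}(-1)^{k-j}\binom{k}{j}=0$ for $k\ge1$, only even powers of $t$ survive since $\sinc$ is even, and the $j=0$ contribution to every positive coefficient drops out because $T(2i,0)=0$ for $i\ge1$; hence the $j$-sum may be started at $1$. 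Collecting the coefficient of $t^{2i}$ gives $[t^{2i}](\sinc t-1)^k=(-1)^{k+i}\frac{2^{2i}}{(2i)!}\sum_{j=1}^{k}(-1)^j\binom{k}{j}\frac{T(2i+j,j)}{\binom{2i+j}{j}}$, and then $B_{2m-1,k}(\dotsc)=0$ while $B_{2m,k}(\dotsc)=\frac{(2m)!}{k!}\,(-1)^{k+m}\frac{2^{2m}}{(2m)!}\sum_{j=1}^{k}(-1)^j\binom{k}{j}\frac{T(2m+j,j)}{\binom{2m+j}{j}}$, which is the claimed identity once $\frac{(2m)!}{(2m)!}$ cancels.

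The only genuine subtlety, and the step I would be most careful about, is the bookkeeping in the first paragraph: one must verify that the entry written in position $n-k+1$ of each argument list really does equal $\sinc^{(n-k+1)}(0)$ for every parity of $k$ (and similarly for the intermediate entries), so that the generating function is being applied to the honest sequence of derivatives of $\sinc$. Everything afterwards is a routine coefficient extraction, with the only minor point being the justification, via $T(2i,0)=0$, for restarting the inner summation at $j=1$.
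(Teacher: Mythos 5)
Your proposal is correct and follows essentially the same route as the paper: identify the arguments as the derivatives of $\sinc$ at the origin, apply the generating identity~\eqref{113-final-formula} to reduce the problem to the Maclaurin coefficients of $(\sinc t-1)^k$, expand by the binomial theorem, and insert the power series of $\sinc^j t$ from Theorem~\ref{Since-series-expan-lem}. The only cosmetic difference is that you read off the coefficient of $t^n$ directly in the clean $T(2i+j,j)$ form, whereas the paper differentiates $\bigl(\frac{\sinc t-1}{t}\bigr)^k$ $n$ times using the raw alternating-sum form of the expansion and then identifies the result with central factorial numbers; your bookkeeping (including the parity check matching the stated argument lists and the justification $T(2i,0)=0$ for dropping $j=0$) is sound.
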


\begin{proof}
From
\begin{equation*}
\sinc z=\sum_{j=0}^{\infty}\frac{(-1)^j}{2j+1}\frac{z^{2j}}{(2j)!}, \quad z\in\mathbb{C},
\end{equation*}
it follows that
\begin{equation}\label{sine-deriv-0-Eq}
(\sinc z)^{(2j)}\big|_{z=0}=\frac{(-1)^j}{2j+1}
\quad\text{and}\quad
(\sinc z)^{(2j-1)}\big|_{z=0}=0
\end{equation}
for $j\in\mathbb{N}$.
\par
On~\cite[p.~133]{Comtet-Combinatorics-74}, the identity
\begin{equation}\label{113-final-formula}
\frac1{m!}\Biggl(\sum_{\ell=1}^\infty z_\ell\frac{t^\ell}{\ell!}\Biggr)^m =\sum_{n=m}^\infty B_{n,m}(z_1,z_2,\dotsc,z_{n-m+1})\frac{t^n}{n!}
\end{equation}
is given for $m\in\mathbb{N}_0$. The formula~\eqref{113-final-formula} implies that
\begin{equation}\label{Bell-Polyn-final-formula}
B_{n+k,k}(z_1,z_2,\dotsc,z_{n+1})
=\binom{n+k}{k}\lim_{t\to0}\frac{\td^{n}}{\td t^{n}}\Biggl[\sum_{\ell=0}^\infty \frac{z_{\ell+1}}{(\ell+1)!}t^{\ell}\Biggr]^k
\end{equation}
for $n\ge k\in\mathbb{N}_0$. Substituting $z_{2j}=\frac{(-1)^j}{2j+1}$ and $z_{2j-1}=0$, that is, $z_j=\frac{1}{j+1}\cos\bigl(\frac{j}{2}\pi\bigr)$, for $j\in\mathbb{N}$ into~\eqref{Bell-Polyn-final-formula} results in
\begin{align*}
&\quad B_{n+k,k}\biggl(0,-\frac{1}{3},0,\frac{1}{5},\dotsc, \frac{1}{n+2}\cos\biggl(\frac{n+1}{2}\pi\biggr)\biggr)\\
&=\binom{n+k}{k}\lim_{t\to0}\frac{\td^{n}}{\td t^{n}}\Biggl[\sum_{\ell=0}^\infty \frac{1}{(\ell+2)!}\cos\biggl(\frac{\ell+1}{2}\pi\biggr)t^{\ell}\Biggr]^k\\
&=\binom{n+k}{k}\lim_{t\to0}\frac{\td^{n}}{\td t^{n}}\biggl(\frac{\sinc t-1}{t}\biggr)^k\\
&=\binom{n+k}{k}\lim_{t\to0}\frac{\td^{n}}{\td t^{n}}\Biggl[\frac{(-1)^k}{t^k}+\frac{(-1)^k}{t^k}\sum_{j=1}^k(-1)^j\binom{k}{j}(\sinc t)^j\Biggr]\\
&=\binom{n+k}{k}\lim_{t\to0}\frac{\td^{n}}{\td t^{n}} \Biggl(\frac{(-1)^k}{t^k} \sum_{\ell=1}^{\infty}(-1)^\ell\Biggl[\sum_{j=1}^k\binom{k}{j} \frac{1}{2^{j}} \frac{1}{(j+2\ell)!}\\
&\quad\times\sum_{q=0}^{j}(-1)^q\binom{{j}}{q} (2q-j)^{j+2\ell}\Biggr]t^{2\ell}\Biggr)\\
&=(-1)^k\binom{n+k}{k}\lim_{t\to0}\frac{\td^{n}}{\td t^{n}} \sum_{\ell=k}^{\infty}(-1)^\ell \Biggl[\sum_{j=1}^k\binom{k}{j} \frac{1}{2^{j}} \frac{1}{(j+2\ell)!}\\
&\quad\times\sum_{q=0}^{j}(-1)^q\binom{{j}}{q} (2q-j)^{j+2\ell}\Biggr]t^{2\ell-k}\\
&=(-1)^k\binom{n+k}{k}\lim_{t\to0}\sum_{\ell=k}^{\infty}(-1)^\ell\Biggl[\sum_{j=1}^k\binom{k}{j} \frac{1}{2^{j}} \frac{1}{(j+2\ell)!}\\
&\quad\times\sum_{q=0}^{j}(-1)^q\binom{{j}}{q} (2q-j)^{j+2\ell}\Biggr]\langle2\ell-k\rangle_nt^{2\ell-k-n}\\
&=\begin{dcases}
0, \quad n+k=2m+1\\
(-1)^{k+m}\frac{(2m)!}{k!}\sum_{j=1}^k\binom{k}{j} \frac{1}{2^{j}(j+2m)!}\sum_{q=0}^{j}(-1)^q\binom{{j}}{q} (2q-j)^{j+2m}, \quad n+k=2m
\end{dcases}
\end{align*}
for $m\in\mathbb{N}$ and $n\ge k\ge1$, where we used the series expansion~\eqref{sine-power-ser-expan-eq} in Theorem~\ref{Since-series-expan-lem}. The proof of Theorem~\ref{part-Bell-sine-thm} is complete.
\end{proof}

\begin{cor}
For $k\ge2$ and $1\le\ell\le k-1$, we have
\begin{equation}\label{QGWSID2}
\sum_{j=1}^k(-1)^j\binom{k}{j}\frac{T(2\ell+j,j)}{\binom{2\ell+j}{j}}=0
\end{equation}
and
\begin{equation*}
\sum_{j=1}^k(-1)^j\binom{k}{j}\sum_{m=0}^{2\ell}(-1)^{m}\binom{2\ell}{m} \biggl(\frac{j}{2}\biggr)^{m} \frac{S(2\ell+j-m,j)} {\binom{2\ell+j-m}{j}}=0.
\end{equation*}
\end{cor}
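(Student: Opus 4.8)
The plan is to obtain \eqref{QGWSID2} by recognizing the alternating binomial sum over $j$ as a Maclaurin coefficient of $(1-\sinhc z)^k$, whose order of vanishing at $z=0$ is evident, and then to read off the second identity by a substitution.

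Concretely, I would start from the binomial identity $\sum_{j=0}^k(-1)^j\binom kj\sinhc^j z=(1-\sinhc z)^k$ and expand the left-hand side using \eqref{sinh-power-ser}, which (after renaming the summation variable to $\ell$) states that $\sinhc^j z=1+\sum_{\ell=1}^\infty\frac{T(2\ell+j,j)}{\binom{2\ell+j}{j}}\frac{(2z)^{2\ell}}{(2\ell)!}$ for $j\in\mathbb{N}$, while $\sinhc^0 z=1$. Because $\sum_{j=0}^k(-1)^j\binom kj=0$, the constant terms cancel, leaving
\[
(1-\sinhc z)^k=\sum_{\ell=1}^\infty\frac{2^{2\ell}}{(2\ell)!}\Biggl[\sum_{j=1}^k(-1)^j\binom kj\frac{T(2\ell+j,j)}{\binom{2\ell+j}{j}}\Biggr]z^{2\ell}.
\]
Since $1-\sinhc z=-\frac{z^2}{3!}-\frac{z^4}{5!}-\cdots$ has a zero of order $2$ at $z=0$, the power $(1-\sinhc z)^k$ has a zero of order $2k$ there, so the coefficient of $z^{2\ell}$ on the left is $0$ whenever $1\le\ell\le k-1$; hence $\sum_{j=1}^k(-1)^j\binom kj\frac{T(2\ell+j,j)}{\binom{2\ell+j}{j}}=0$ for those $\ell$, which is \eqref{QGWSID2}. (This is exactly the vanishing that is invoked, in the proof of Theorem~\ref{part-Bell-sine-thm}, to pass from $\sum_{\ell=1}^\infty$ to $\sum_{\ell=k}^\infty$; the same computation carried out with $\sinc$ and \eqref{sine-power-ser-expan-eq} in place of $\sinhc$ and \eqref{sinh-power-ser} yields \eqref{QGWSID2} once more.)

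For the second identity, I would substitute into \eqref{QGWSID2} the closed form \eqref{T(j+ell-ell)-j}, with its parameters $j$ and $\ell$ specialized to $2\ell$ and $j$ respectively, that is,
\[
\frac{T(2\ell+j,j)}{\binom{2\ell+j}{j}}=\sum_{m=0}^{2\ell}(-1)^m\binom{2\ell}{m}\biggl(\frac j2\biggr)^m\frac{S(2\ell+j-m,j)}{\binom{2\ell+j-m}{j}},
\]
which is valid for $j\in\mathbb{N}$ and $\ell\in\mathbb{N}_0$, hence for every $j$ with $1\le j\le k$; the resulting finite double sum is precisely the claimed expression, so no further rearrangement is needed. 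I do not anticipate a genuine obstacle here: the one substantive ingredient is the elementary fact that $(1-\sinhc z)^k$ vanishes to order $2k$ at the origin, and what remains is purely clerical — chiefly, not letting the power $j$ and the index $2\ell$ in \eqref{QGWSID2} get confused with the parameters $j$ and $\ell$ of \eqref{T(j+ell-ell)-j}.
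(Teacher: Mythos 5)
Your proposal is correct and follows essentially the same route as the paper: the paper derives \eqref{QGWSID2} from the step in the proof of Theorem~\ref{part-Bell-sine-thm} where the summation index jumps from $\ell=1$ to $\ell=k$ (which is justified precisely by the order-$2k$ vanishing of $(1-\sinc t)^k$ at $t=0$, the same fact you use for $(1-\sinhc z)^k$), and then obtains the second identity by substituting the $T$-to-$S$ conversion formula, just as you do. If anything, your write-up makes explicit the order-of-vanishing argument that the paper leaves implicit in that index shift.
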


\begin{proof}
This follows from the proof of Theorem~\ref{part-Bell-sine-thm} and further making use of the formula~\eqref{T(j+ell-ell)}.
\end{proof}

\begin{cor}\label{part-Bell-sine-cor}
For $n\ge k\ge1$, partial Bell polynomials $B_{n,k}$ satisfy
\begin{multline*}
B_{n,k}\biggl(0,-\frac{1}{3},0,\frac{1}{5},\dotsc, \frac{1}{n-k+2}\cos\biggl(\frac{n-k+1}{2}\pi\biggr)\biggr)\\*
=(-1)^{k}\cos\biggl(\frac{n\pi}{2}\biggr)\frac{2^{n}}{k!}\sum_{j=1}^k(-1)^j\binom{k}{j} \frac{T(n+j,j)}{\binom{n+j}{j}}
\end{multline*}
and
\begin{multline*}
B_{n,k}\biggl(0,-\frac{1}{3},0,\frac{1}{5},\dotsc, \frac{1}{n-k+2}\cos\biggl(\frac{n-k+1}{2}\pi\biggr)\biggr)\\*
=(-1)^{k}\cos\biggl(\frac{n\pi}{2}\biggr)\frac{2^{n}}{k!}\sum_{j=1}^k(-1)^j\binom{k}{j} \sum_{m=0}^{n}(-1)^{m}\binom{n}{m} \biggl(\frac{j}{2}\biggr)^{m} \frac{S(n+j-m,j)} {\binom{n+j-m}{j}}.
\end{multline*}
\end{cor}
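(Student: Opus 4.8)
The plan is to deduce both identities directly from Theorem~\ref{part-Bell-sine-thm} together with the conversion~\eqref{T(j+ell-ell)-j} between central factorial numbers and Stirling numbers of the second kind; in effect the only thing to verify is that the two parity cases recorded in Theorem~\ref{part-Bell-sine-thm} are merged by the single factor $\cos\frac{n\pi}{2}$, which equals $(-1)^{n/2}$ when $n$ is even and $0$ when $n$ is odd.

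First I would observe that the argument vector in the corollary is exactly $(z_1,z_2,\dotsc,z_{n-k+1})$ with $z_j=\frac{1}{j+1}\cos\frac{j\pi}{2}$, i.e.\ the vector already used in the proof of Theorem~\ref{part-Bell-sine-thm}: taking $n=2m-1$ and using $\cos\!\bigl(m\pi-\frac{k\pi}{2}\bigr)=(-1)^m\cos\frac{k\pi}{2}$, respectively $n=2m$ and $\cos\!\bigl(m\pi+\frac{\pi}{2}-\frac{k\pi}{2}\bigr)=(-1)^m\sin\frac{k\pi}{2}$, recovers the two vectors displayed in Theorem~\ref{part-Bell-sine-thm}. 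Then I would split on the parity of $n$. If $n=2m$ is even, Theorem~\ref{part-Bell-sine-thm} evaluates the Bell polynomial as $(-1)^{m+k}\frac{2^{2m}}{k!}\sum_{j=1}^k(-1)^j\binom{k}{j}\frac{T(2m+j,j)}{\binom{2m+j}{j}}$; since $\cos\frac{n\pi}{2}=(-1)^m$, $2^{2m}=2^{n}$, and $T(2m+j,j)=T(n+j,j)$, this is precisely the right-hand side of the first claimed formula. If $n=2m-1$ is odd, Theorem~\ref{part-Bell-sine-thm} gives the value $0$, while $\cos\frac{n\pi}{2}=0$, so the right-hand side also vanishes (consistently, every $T(n+j,j)$ with $n$ odd already vanishes by~\eqref{QGWSID3O}). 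This establishes the first identity for all $n\ge k\ge1$; the second then follows by inserting~\eqref{T(j+ell-ell)-j} — used with its lower index equal to $j$ and its outer index equal to $n$, i.e.\ $\frac{T(n+j,j)}{\binom{n+j}{j}}=\sum_{m=0}^{n}(-1)^m\binom{n}{m}\bigl(\frac{j}{2}\bigr)^{m}\frac{S(n+j-m,j)}{\binom{n+j-m}{j}}$ — into the first identity and reordering the summations.

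I do not expect any step to be genuinely hard; the only place needing care is the parity bookkeeping, namely matching the sign $(-1)^m$ arising from the even case of Theorem~\ref{part-Bell-sine-thm} with $\cos\frac{n\pi}{2}$ and confirming that the vanishing odd case is correctly absorbed into the one closed form. As an alternative self-contained route, one could instead start from the closing case distinction in the proof of Theorem~\ref{part-Bell-sine-thm} and rewrite the inner alternating sum via~\eqref{T(n-k)-EF-Eq(3.1)} as $\sum_{q=0}^{j}(-1)^q\binom{j}{q}(2q-j)^{j+2m}=(-1)^j2^{j+2m}j!\,T(j+2m,j)$; that single substitution, together with~\eqref{T(j+ell-ell)-j} for the second formula, delivers both identities after collecting terms.
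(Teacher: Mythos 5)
Your proposal is correct and follows essentially the same route as the paper, which likewise derives the corollary by combining Theorem~\ref{part-Bell-sine-thm} with the identity~\eqref{QGWSID3O} (for the vanishing odd case) and the conversion formula~\eqref{T(j+ell-ell)-j} (for the Stirling-number version). You merely make explicit the parity bookkeeping and the trigonometric identifications $\cos\bigl(\frac{2m-k}{2}\pi\bigr)=(-1)^m\cos\frac{k\pi}{2}$ and $\cos\bigl(\frac{2m-k+1}{2}\pi\bigr)=(-1)^m\sin\frac{k\pi}{2}$ that the paper leaves implicit.
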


\begin{proof}
This follows from combining the identity~\eqref{QGWSID3O} with Theorem~\ref{part-Bell-sine-thm} and the formula~\eqref{T(j+ell-ell)-j}.
\end{proof}

Applying Theorem~\ref{part-Bell-sine-thm} and Corollary~\ref{part-Bell-sine-cor}, we deduce the following corollary.

\begin{cor}\label{exp-sinc-derv-cor}
For $z\in\mathbb{C}$, we have
\begin{align*}
\te^{\sinc z-1}
&=1+\sum_{k=1}^{\infty}(-1)^k\Biggl[\sum_{j=1}^{2k} \frac{(-1)^{j}}{j!}\sum_{\ell=1}^j(-1)^\ell\binom{j}{\ell} \frac{T(2k+\ell,\ell)}{\binom{2k+\ell}{\ell}}\Biggr]\frac{(2z)^{2k}}{(2k)!}\\
&=1-\frac{z^2}{6}+\frac{z^4}{45}-\frac{107 z^6}{45360}+\frac{1189 z^8}{5443200}-\frac{1633 z^{10}}{89812800}+\dotsm
\end{align*}
and
\begin{multline*}
\te^{\sinc z-1}=1+\sum_{k=1}^{\infty}(-1)^k\Biggl[\sum_{j=1}^{2k} \frac{(-1)^{j}}{j!}\sum_{\ell=1}^j(-1)^\ell\binom{j}{\ell}\\
\times\sum_{m=0}^{2k}(-1)^{m}\binom{2k}{m} \biggl(\frac{\ell}{2}\biggr)^{m} \frac{S(2k+\ell-m,\ell)} {\binom{2k+\ell-m}{\ell}}\Biggr]\frac{(2z)^{2k}}{(2k)!}.
\end{multline*}
\end{cor}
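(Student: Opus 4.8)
The plan is to view $\te^{\sinc z-1}$ as the composite $f\circ h$ with $f(w)=\te^{w}$ and $h(z)=\sinc z-1$. Both $f$ and $h$ are entire, so $\te^{\sinc z-1}$ is entire and its Maclaurin series converges on all of $\mathbb{C}$; only the coefficients need to be found. I would compute them via the Fa\`a di Bruno formula~\eqref{Bruno-Bell-Polynomial} (or, equivalently, the identity~\eqref{113-final-formula} summed over $m$), using $h(0)=0$ and $f^{(k)}(w)=\te^{w}$, hence $f^{(k)}(h(0))=1$ for all $k$. This gives, for each $n\in\mathbb{N}$,
\begin{equation*}
\frac{\td^{n}}{\td z^{n}}\te^{\sinc z-1}\Big|_{z=0}=\sum_{k=1}^{n}B_{n,k}\bigl(h'(0),h''(0),\dotsc,h^{(n-k+1)}(0)\bigr),
\end{equation*}
and since $h^{(j)}(0)=(\sinc z)^{(j)}\big|_{z=0}$ for $j\ge1$, formula~\eqref{sine-deriv-0-Eq} shows $h^{(j)}(0)=\frac{1}{j+1}\cos\frac{j\pi}{2}$, i.e.\ the argument list $0,-\frac13,0,\frac15,\dotsc$. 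These are exactly the arguments for which Theorem~\ref{part-Bell-sine-thm} and Corollary~\ref{part-Bell-sine-cor} supply closed forms, so
\begin{equation*}
\te^{\sinc z-1}=1+\sum_{n=1}^{\infty}\Biggl[\sum_{k=1}^{n}B_{n,k}\biggl(0,-\frac13,0,\frac15,\dotsc,\frac{1}{n-k+2}\cos\frac{(n-k+1)\pi}{2}\biggr)\Biggr]\frac{z^{n}}{n!}.
\end{equation*}

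Next I would substitute the first identity of Corollary~\ref{part-Bell-sine-cor}, namely $B_{n,k}(\dotsc)=(-1)^{k}\cos\frac{n\pi}{2}\frac{2^{n}}{k!}\sum_{j=1}^{k}(-1)^{j}\binom{k}{j}\frac{T(n+j,j)}{\binom{n+j}{j}}$. The factor $\cos\frac{n\pi}{2}$ annihilates all odd $n$ and equals $(-1)^{m}$ when $n=2m$, so only even powers of $z$ survive. Pulling the factor $(-1)^{m}2^{2m}$ out of the inner sum, absorbing $2^{2m}z^{2m}=(2z)^{2m}$, and relabelling the summation variables ($m$ becomes the outer index, the Bell index becomes the middle index, and the innermost index becomes the last) produces exactly the first displayed series in the corollary. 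Replacing $B_{n,k}(\dotsc)$ instead by the second identity of Corollary~\ref{part-Bell-sine-cor} — equivalently, inserting the closed form~\eqref{T(j+ell-ell)} for $\frac{T(2k+\ell,\ell)}{\binom{2k+\ell}{\ell}}$ in terms of the Stirling numbers $S(n,k)$ of the second kind — yields the second displayed series.

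Finally, the explicit rational coefficients $1-\frac{z^{2}}{6}+\frac{z^{4}}{45}-\frac{107z^{6}}{45360}+\dotsm$ come from directly exponentiating the truncation $\sinc z-1=-\frac{z^{2}}{6}+\frac{z^{4}}{120}-\frac{z^{6}}{5040}+\dotsm$ and collecting like powers of $z$, which is a purely mechanical check.

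I do not anticipate a genuine obstacle: the analytic side is trivial because everything in sight is entire, and the combinatorial work has already been carried out in Theorem~\ref{part-Bell-sine-thm} and Corollary~\ref{part-Bell-sine-cor}. The only thing requiring care is bookkeeping — confirming that the argument list $\frac{1}{j+1}\cos\frac{j\pi}{2}$ of the Bell polynomials genuinely matches the one in Corollary~\ref{part-Bell-sine-cor} (which amounts to the relation $\cos\frac{(n-k+1)\pi}{2}=(-1)^{m}\sin\frac{k\pi}{2}$ when $n=2m$, already dispatched inside the proof of Theorem~\ref{part-Bell-sine-thm}), and keeping the signs and powers of $2$ straight through the re-indexing — so the ``hard part'' is simply avoiding an arithmetic slip.
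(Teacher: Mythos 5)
Your proposal is correct and follows essentially the same route as the paper: Fa\`a di Bruno applied to the exponential of the sinc function, the derivative values in~\eqref{sine-deriv-0-Eq}, the closed forms from Theorem~\ref{part-Bell-sine-thm}/Corollary~\ref{part-Bell-sine-cor}, and then~\eqref{T(j+ell-ell)} for the Stirling-number version. The only (cosmetic) difference is that you compose with $h(z)=\sinc z-1$ so that $f^{(k)}(h(0))=1$, whereas the paper expands $\te^{\sinc z}$ and carries an overall factor of $\te$; the partial Bell polynomial arguments and all subsequent bookkeeping are identical.
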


\begin{proof}
Making use of the Fa\`a di Bruno formula~\eqref{Bruno-Bell-Polynomial}, the derivatives in~\eqref{sine-deriv-0-Eq}, and Theorem~\ref{part-Bell-sine-thm}, we obtain
\begin{align*}
\te^{\sinc z}&=\sum_{k=0}^{\infty}\biggl(\lim_{z\to0}\frac{\td^k\te^{\sinc z}}{\td z^k}\biggr)\frac{z^k}{k!}\\
&=\te+\sum_{k=1}^{\infty}\Biggl[\lim_{z\to0}\sum_{j=1}^{k}\te^{\sinc z} B_{k,j}\bigl((\sinc z)',(\sinc z)'',\dotsc, (\sinc z)^{(k-j+1)}\bigr)\Biggr]\frac{z^k}{k!}\\
&=\te+\te\sum_{k=1}^{\infty}\Biggl[\sum_{j=1}^{k} B_{k,j}\bigl((\sinc z)'\big|_{z=0},(\sinc z)''\big|_{z=0}, \dotsc, (\sinc z)^{(k-j+1)}\big|_{z=0}\bigr)\Biggr]\frac{z^k}{k!}\\
&=\te+\te\sum_{k=1}^{\infty}\Biggl[\sum_{j=1}^{k} B_{k,j}\biggl(0,-\frac{1}{3}, \dotsc, \frac{1}{k-j+2}\cos\biggl(\frac{k-j+1}{2}\pi\biggr)\biggr)\Biggr]\frac{z^k}{k!}\\
&=\te+\te\sum_{k=1}^{\infty}\Biggl[\sum_{j=1}^{2k} B_{2k,j}\biggl(0,-\frac{1}{3}, \dotsc, \frac{1}{2k-j+2}\cos\biggl(\frac{2k-j+1}{2}\pi\biggr)\biggr)\Biggr]\frac{z^{2k}}{(2k)!}\\
&=\te+\te\sum_{k=1}^{\infty}\Biggl[\sum_{j=1}^{2k} (-1)^{k+j}\frac{2^{2k}}{j!}\sum_{\ell=1}^j(-1)^\ell\binom{j}{\ell} \frac{T(2k+\ell,\ell)}{\binom{2k+\ell}{\ell}}\Biggr]\frac{z^{2k}}{(2k)!}.
\end{align*}
Further considering~\eqref{T(j+ell-ell)}, we prove Corollary~\ref{exp-sinc-derv-cor}.
\end{proof}

\section{Series expansions of real powers of sinc and sinhc functions}

In this section, with the aid of Theorem~\ref{part-Bell-sine-thm} and other results in the above sections, we establish series expansions at the point $z=0$ of the power functions $\sinc^rz$ and $\sinhc^rz$ for real $r\in\mathbb{R}$.

\begin{thm}\label{recip-sin-ser-closed-thm}
When $r\ge0$, the series expansions
\begin{equation}\label{recip-sin-ser-closed-eq}
\sinc^rz=1+\sum_{q=1}^{\infty}(-1)^q\Biggl[\sum_{k=1}^{2q}\frac{(-r)_k}{k!}
\sum_{j=1}^k(-1)^j\binom{k}{j} \frac{T(2q+j,j)}{\binom{2q+j}{j}}\Biggr]\frac{(2z)^{2q}}{(2q)!}
\end{equation}
and
\begin{multline}\label{recip-sin-stirl-closed-eq}
\sinc^rz=1+\sum_{q=1}^{\infty}(-1)^q\Biggl[\sum_{k=1}^{2q}\frac{(-r)_k}{k!}
\sum_{j=1}^k(-1)^j\binom{k}{j} \\
\times\sum_{m=0}^{2q}(-1)^{m}\binom{2q}{m} \biggl(\frac{j}{2}\biggr)^{m} \frac{S(2q+j-m,j)} {\binom{2q+j-m}{j}}\Biggr]\frac{(2z)^{2q}}{(2q)!}
\end{multline}
are convergent in $z\in\mathbb{C}$, where the rising factorial $(r)_k$ is defined by
\begin{equation*}
(r)_k=\prod_{\ell=0}^{k-1}(r+\ell)
=
\begin{cases}
r(r+1)\dotsm(r+k-1), & k\ge1;\\
1, & k=0.
\end{cases}
\end{equation*}
When $r<0$, the series expansions~\eqref{recip-sin-ser-closed-eq} and~\eqref{recip-sin-stirl-closed-eq} are convergent in $|z|<\pi$.
\end{thm}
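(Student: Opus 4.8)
The plan is to realize $\sinc^rz$ as the composite $f\circ h$ with $f(x)=(1+x)^r$ and $h(z)=\sinc z-1$, so that $h(0)=0$, and the Fa\`a di Bruno formula~\eqref{Bruno-Bell-Polynomial} then expresses the derivatives of $\sinc^rz$ at $z=0$ through the derivatives of $f$ at $0$ and the partial Bell polynomials evaluated precisely at the argument list appearing in Theorem~\ref{part-Bell-sine-thm}. Here $f^{(k)}(0)=r(r-1)\dotsm(r-k+1)=(-1)^k(-r)_k$ for $k\in\mathbb{N}$, and, by~\eqref{sine-deriv-0-Eq}, $h^{(2j)}(0)=\frac{(-1)^j}{2j+1}$ and $h^{(2j-1)}(0)=0$, that is, $\bigl(h'(0),h''(0),\dotsc\bigr)=\bigl(0,-\frac13,0,\frac15,\dotsc\bigr)$, which is exactly the argument list occurring in Theorem~\ref{part-Bell-sine-thm}.

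The first main step is to expand $\sinc^rz=f\circ h(z)$ by~\eqref{Bruno-Bell-Polynomial} and read off the Maclaurin coefficients. For $n\ge1$ this gives
\[
\bigl(\sinc^rz\bigr)^{(n)}\big|_{z=0}=\sum_{k=1}^{n}(-1)^k(-r)_k\,B_{n,k}\biggl(0,-\frac13,0,\frac15,\dotsc,\frac{1}{n-k+2}\cos\frac{(n-k+1)\pi}{2}\biggr).
\]
By Theorem~\ref{part-Bell-sine-thm}, the Bell polynomials with odd first index vanish, so $\bigl(\sinc^rz\bigr)^{(2m-1)}\big|_{z=0}=0$, whereas those with even first index equal $(-1)^{m+k}\frac{2^{2m}}{k!}\sum_{j=1}^k(-1)^j\binom{k}{j}\frac{T(2m+j,j)}{\binom{2m+j}{j}}$; since $(-1)^k(-1)^{m+k}=(-1)^m$, collecting gives $\bigl(\sinc^rz\bigr)^{(2m)}\big|_{z=0}=(-1)^m2^{2m}\sum_{k=1}^{2m}\frac{(-r)_k}{k!}\sum_{j=1}^k(-1)^j\binom{k}{j}\frac{T(2m+j,j)}{\binom{2m+j}{j}}$. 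Substituting into $\sinc^rz=\sum_{n=0}^\infty\frac{(\sinc^rz)^{(n)}|_{z=0}}{n!}z^n$, in which only the even-order terms survive and the constant term is $\sinc^r0=1$, and rewriting $\frac{2^{2q}z^{2q}}{(2q)!}=\frac{(2z)^{2q}}{(2q)!}$, produces the expansion~\eqref{recip-sin-ser-closed-eq}. The twin expansion~\eqref{recip-sin-stirl-closed-eq} then follows by replacing each ratio $\frac{T(2q+j,j)}{\binom{2q+j}{j}}$ through the representation~\eqref{T(j+ell-ell)}, with its index $\ell$ taken to be $j$ and its index $j$ taken to be $q$, and simplifying.

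The remaining step, the convergence assertions, is where I expect the real work to lie. When $r$ is a nonnegative integer, $f^{(k)}(0)$ vanishes for $k>r$, so every inner $k$-sum terminates and $\sinc^rz$ is a finite product of entire functions; hence the series converges for all $z\in\mathbb{C}$. For general $r$, the right-hand side of~\eqref{recip-sin-ser-closed-eq} is the Maclaurin expansion of the analytic function $\sinc^rz=\te^{r\ln\sinc z}$: since $\sinc z$ is entire, nonvanishing on the simply-connected disc $|z|<\pi$ and equal to $1$ at the origin, the principal value $\sinc^rz$ extends analytically across that disc, so the series converges there for every $r$; and when $r<0$ the nearest zeros $z=\pm\pi$ of $\sinc z$ become a pole or a branch point of $\sinc^rz$, fixing the radius of convergence at $\pi$. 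The delicate and, I expect, hardest part is the behaviour in the remaining cases, where analyticity of $\sinc^rz$ beyond $|z|<\pi$ is not available: here one should instead estimate directly the growth of the Maclaurin coefficients in~\eqref{recip-sin-ser-closed-eq}, exploiting the cancellation~\eqref{QGWSID2} (which collapses the inner $k$-sum to the range $k\le q$) together with the superfactorial decay of $\frac{2^{2q}}{(2q)!}$, so as to control $\limsup_{q\to\infty}$ of the $q$th root of $\frac{2^{2q}}{(2q)!}$ times the bracketed coefficient, and thereby to establish the asserted domain of convergence.
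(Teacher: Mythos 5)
Your derivation of the two expansions is essentially the paper's own proof. The paper applies the Fa\`a di Bruno formula with $f(u)=u^r$ and $u=\sinc z$, so that $f^{(k)}$ evaluated at $u=1$ gives $\langle r\rangle_k$; your choice $f(x)=(1+x)^r$, $h(z)=\sinc z-1$ is the same computation after a cosmetic shift (the Bell-polynomial arguments are unchanged since $h$ and $\sinc$ have the same derivatives). The sign bookkeeping $\langle r\rangle_k=(-1)^k(-r)_k$ and the passage from~\eqref{recip-sin-ser-closed-eq} to~\eqref{recip-sin-stirl-closed-eq} via~\eqref{T(j+ell-ell)} also match the paper exactly. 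This part of your proposal is correct and complete.

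Where you diverge is the convergence discussion, and there you should know that the paper's own proof says nothing at all about convergence: it computes the coefficients and stops. Your observations that the series converges on $|z|<\pi$ for every $r$ (analyticity and nonvanishing of $\sinc$ there) and on all of $\mathbb{C}$ when $r\in\mathbb{N}_0$ are correct and already go beyond the paper. However, the ``remaining case'' you defer --- non-integer $r\ge0$ --- is not merely the hardest part; the asserted conclusion is false there. Near $z=\pi$ one has $\sinc z=(z-\pi)g(z)$ with $g$ analytic and nonvanishing, so $\sinc^rz\sim c(z-\pi)^r$, which is not analytic at $z=\pi$ for any non-integer $r$ (positive or negative): if the Maclaurin series had radius of convergence exceeding $\pi$, its sum would be an analytic extension of $\sinc^rz$ past $z=\pi$, contradicting the behaviour $c(\pi-x)^r$ on the real segment $(\pi-\varepsilon,\pi)$. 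Hence by Cauchy--Hadamard the radius of convergence equals $\pi$ exactly for every $r\notin\mathbb{N}_0$, and no estimate of coefficient growth using~\eqref{QGWSID2} can rescue the claim of convergence on all of $\mathbb{C}$ for $r\ge0$. You should therefore replace the deferred programme by the statement that the expansions hold on $\mathbb{C}$ when $r\in\mathbb{N}_0$ and on $|z|<\pi$ otherwise.
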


\begin{proof}
By virtue of the Fa\`a di Bruno formula~\eqref{Bruno-Bell-Polynomial}, we obtain
\begin{gather*}
\frac{\td^j(\sinc^rz)}{\td z^j}
=\sum_{k=1}^{j} \frac{\td^ku^r}{\td u^k} B_{j,k}\bigl((\sinc z)', (\sinc z)'', \dotsc, (\sinc z)^{(j-k+1)}\bigr)\\
=\sum_{k=1}^{j} \langle r\rangle_k \sinc^{r-k}z B_{j,k}\bigl((\sinc z)', (\sinc z)'', \dotsc, (\sinc z)^{(j-k+1)}\bigr)\\
\to\sum_{k=1}^{j} \langle r\rangle_k B_{j,k}\biggl(0, -\frac{1}{3},0,\frac{1}{5}, \dotsc, \frac{1}{j-k+2}\sin\frac{(j-k)\pi}{2}\biggr), \quad z\to0\\
=\begin{dcases}
0, & j=2m-1\\
\sum_{k=1}^{2m} \langle r\rangle_k B_{2m,k}\biggl(0, -\frac{1}{3},0,\frac{1}{5}, \dotsc, \frac{1}{j-k+2}\sin\frac{(2m-k)\pi}{2}\biggr), & j=2m\\
\end{dcases}
\end{gather*}
for $m\in\mathbb{N}$, where $u=u(z)=\sinc z$, the notation
\begin{equation*}
\langle r\rangle_k=
\prod_{k=0}^{k-1}(r-k)=
\begin{cases}
r(r-1)\dotsm(r-k+1), & k\ge1\\
1,& k=0
\end{cases}
\end{equation*}
for $r\in\mathbb{R}$ is called the falling factorial, and we used derivatives in~\eqref{sine-deriv-0-Eq}. Therefore, with the help of Theorem~\ref{part-Bell-sine-thm}, we arrive at
\begin{gather*}
\begin{aligned}
\sinc^rz&=1+\sum_{j=1}^{\infty}\biggl[\lim_{z\to0}\frac{\td^j(\sinc^rz)}{\td z^j}\biggr]\frac{z^j}{j!}\\
&=1+\sum_{m=1}^{\infty}\biggl[\lim_{z\to0}\frac{\td^{2m}(\sinc^rz)}{\td z^{2m}}\biggr]\frac{z^{2m}}{(2m)!}
\end{aligned}\\
=1+\sum_{m=1}^{\infty}\Biggl[\sum_{k=1}^{2m} \langle r\rangle_k B_{2m,k}\biggl(0, -\frac{1}{3},0,\frac{1}{5}, \dotsc, \frac{1}{j-k+2}\sin\frac{(2m-k)\pi}{2}\biggr)\Biggr]\frac{z^{2m}}{(2m)!}\\
=1+\sum_{m=1}^{\infty}\Biggl[\sum_{k=1}^{2m}(-1)^{m+k}\frac{\langle r\rangle_k}{k!}\sum_{j=1}^k\binom{k}{j} \frac{1}{2^{j}} \frac{1}{(2m+j)!}\sum_{q=0}^{j}(-1)^q\binom{{j}}{q} (2q-j)^{2m+j}\Biggr]z^{2m}\\
=1+\sum_{m=1}^{\infty}(-1)^m\Biggl[\sum_{k=1}^{2m}\frac{(-r)_k}{k!}
\sum_{j=1}^k(-1)^j\binom{k}{j} \frac{T(2m+j,j)}{\binom{2m+j}{j}}\Biggr]\frac{(2z)^{2m}}{(2m)!}.
\end{gather*}
By virtue of~\eqref{T(j+ell-ell)}, the proof of Theorem~\ref{recip-sin-ser-closed-thm} is thus complete.
\end{proof}

\begin{cor}
For $r\in\mathbb{R}$ and $z\in\mathbb{C}$, we have
\begin{equation}\label{recip-sinh-ser-cl-eq}
\sinhc^rz=1+\sum_{q=1}^{\infty}\Biggl[\sum_{k=1}^{2q}\frac{(-r)_k}{k!}
\sum_{j=1}^k(-1)^j\binom{k}{j} \frac{T(2q+j,j)}{\binom{2q+j}{j}}\Biggr]\frac{(2z)^{2q}}{(2q)!}
\end{equation}
and
\begin{multline}\label{recip-sinh-stirl-cl-eq}
\sinhc^rz=1+\sum_{q=1}^{\infty}\Biggl[\sum_{k=1}^{2q}\frac{(-r)_k}{k!}
\sum_{j=1}^k(-1)^j\binom{k}{j} \\
\times\sum_{m=0}^{2q}(-1)^{m}\binom{2q}{m} \biggl(\frac{j}{2}\biggr)^{m} \frac{S(2q+j-m,j)} {\binom{2q+j-m}{j}}\Biggr]\frac{(2z)^{2q}}{(2q)!}.
\end{multline}
\end{cor}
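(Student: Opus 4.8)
The plan is to derive the two expansions from Theorem~\ref{recip-sin-ser-closed-thm} by the substitution $z\mapsto z\ti$, exactly as the series~\eqref{sinh-power-ser} was obtained from~\eqref{sine-power-ser-expan-eq} and conversely. First I would record the identity
\begin{equation*}
\sinc(z\ti)=\frac{\sin(z\ti)}{z\ti}=\frac{\ti\sinh z}{z\ti}=\frac{\sinh z}{z}=\sinhc z,
\end{equation*}
valid for all $z\in\mathbb{C}$ as an identity of entire functions (with the convention $\sinc0=\sinhc0=1$). Since $\sinc z$ and $\sinhc z$ are analytic and take the value $1$ at $z=0$, their principal $r$-th powers $\sinc^rz$ and $\sinhc^rz$ equal $\te^{r\ln\sinhc z}$ with the principal logarithm in a neighbourhood of the origin, and the composition rule for principal powers gives $\sinc^r(z\ti)=\sinhc^rz$ there; hence the corresponding Maclaurin series coincide after the substitution $z\mapsto z\ti$.

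Next I would substitute $z\ti$ for $z$ in the expansions~\eqref{recip-sin-ser-closed-eq} and~\eqref{recip-sin-stirl-closed-eq}. The coefficient in square brackets depends only on $q$ (and $r$), not on $z$, so it is unaffected; the only change is in the monomial factor, where
\begin{equation*}
\frac{(2z\ti)^{2q}}{(2q)!}=\ti^{2q}\frac{(2z)^{2q}}{(2q)!}=(-1)^q\frac{(2z)^{2q}}{(2q)!}.
\end{equation*}
Thus the factor $(-1)^q$ already present in~\eqref{recip-sin-ser-closed-eq} and~\eqref{recip-sin-stirl-closed-eq} is cancelled, and what remains is precisely~\eqref{recip-sinh-ser-cl-eq} and~\eqref{recip-sinh-stirl-cl-eq}. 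Finally, since $z\mapsto z\ti$ is an isometry of $\mathbb{C}$ fixing the origin, the regions of convergence transfer verbatim: the two series for $\sinhc^rz$ converge on all of $\mathbb{C}$ when $r\ge0$ and on the disc $|z|<\pi$ when $r<0$, the restriction in the latter case coming from the zeros $z=k\pi\ti$, $k\in\mathbb{Z}\setminus\{0\}$, of $\sinhc z$.

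There is no genuine obstacle here: the argument is a change of variable. The only point needing a word of care is the branch bookkeeping for the real power, i.e.\ checking that $\sinc^r(z\ti)$ and $\sinhc^rz$ denote the same analytic germ at $z=0$, which is immediate because both equal $\te^{r\ln\sinhc z}$ with the principal logarithm near the point where $\sinhc z=1$. Alternatively, one could bypass the substitution and reprove the corollary ab initio by repeating the Fa\`a di Bruno computation from the proof of Theorem~\ref{recip-sin-ser-closed-thm} with $\sinc$ replaced by $\sinhc$, using $(\sinhc z)^{(2j)}\big|_{z=0}=\frac1{2j+1}$ and $(\sinhc z)^{(2j-1)}\big|_{z=0}=0$ in place of~\eqref{sine-deriv-0-Eq}; the absence of the sign $(-1)^j$ in these derivatives is exactly what removes the overall factor $(-1)^q$.
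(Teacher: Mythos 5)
Your proof is correct and follows essentially the same route as the paper: substituting $z\ti$ for $z$ in Theorem~\ref{recip-sin-ser-closed-thm} via the identity $\sinc(z\ti)=\sinhc z$, with the factor $\ti^{2q}=(-1)^q$ absorbing the alternating sign. Your additional remarks on the principal branch and on the convergence region $|z|<\pi$ when $r<0$ are careful touches that the paper's one-line proof omits.
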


\begin{proof}
The series expansions~\eqref{recip-sinh-ser-cl-eq} and~\eqref{recip-sinh-stirl-cl-eq} follow from replacing $\sinc z$ by $\sinhc(z\ti)$ in~\eqref{recip-sin-ser-closed-eq} and~\eqref{recip-sin-stirl-closed-eq} and then substituting $z\ti$ for $z$.
\end{proof}

\section{Combinatorial proofs of two identities}
We now modify combinatorial proofs at \url{https://mathoverflow.net/a/420309/} for the identities~\eqref{QGWSID3O} and~\eqref{QGWSID2} as follows.

\begin{proof}[First combinatorial proof of the identity~\eqref{QGWSID3O}]
Theorem~8 in~\cite[p.~247]{Broder} states that weighted Stirling numbers of the second kind
$$
{n+k\brace k}_r=R(n+k-r,n-r,r)
$$
are the monomial symmetric functions of degree $k$ of the integers $r,\dotsc,n$. This means
\begin{equation*}
R(n, k, r)=h_{n-k}(r, r+1, r+2, \dotsc, r+k),
\end{equation*}
where $h_m$ is the complete homogenous symmetric function. So it follows that
$$
R\biggl(2m+j-1,j,-\frac{j}2\biggr)=h_{2m-1}\biggl(-\frac{j}2, 1-\frac{j}2, 2-\frac{j}2, \dotsc, \frac{j}2\biggr)
$$
and we can pair up each monomial $x_{a_1} x_{a_2} x_{a_3} \dotsm$ with
$$
x_{2m-a_1} x_{2m-a_2} x_{2m-a_3} \dotsm=(-1)^{2m-1} x_{a_1} x_{a_2} x_{a_3} \dotsm,
$$
which gives cancellation. The only terms which don't pair up like this with a different term are those which include $x_m=0$ and pair with themselves, but by virtue of including a zero multiplicand they don't contribute anything to the sum. Consequently, by the relation~\eqref{Weight-Stirl-2nd-Cent-Fact-No}, the identity~\eqref{QGWSID3O} is proved.
\end{proof}

\begin{proof}[Second combinatorial proof of the identity~\eqref{QGWSID3O}]
Let
\begin{equation}\label{OperatorT-def}
\mathcal{T}(n,k)= 2^{n-k} T(n,k), \quad n\ge k\ge0
\end{equation}
with $\mathcal{T}(0,0)=1$.
This scaled central triangle number $\mathcal{T}(n,k)$ counts set partitions of $n$ elements into $k$ odd-sized blocks. See the references~\cite{Comtet-French-1972, OEIS-A136630}. This immediately gives the identity~\eqref{QGWSID3O}, since an even-sized set cannot be partitioned into an odd number of odd-sized blocks, nor an odd-sized set partitioned into an even number of odd-sized blocks.
\end{proof}

\begin{proof}[A combinatorial proof of the identity~\eqref{QGWSID2}]
Since
\begin{align*}
\sum_{j=1}^k (-1)^j \binom{k}{j} \frac{T(2\ell+j,j)}{\binom{2\ell+j}{j}}
&=\sum_{j=1}^k (-1)^j \frac{k!(2\ell)!}{(k-j)! (2\ell+j)!} T(2\ell+j,j) \\
&=\frac{(-1)^k}{2^{2\ell} \binom{2\ell+k}{k}} \sum_{j=1}^k (-1)^{k-j} \binom{2\ell+k}{2\ell+j} \mathcal{T}(2\ell+j,j),
\end{align*}
the identity~\eqref{QGWSID2} is equivalent to
\begin{equation}\label{QGWSID2-Equiv}
\sum_{j=1}^k (-1)^{k-j} \binom{2\ell+k}{2\ell+j} \mathcal{T}(2\ell+j,j)=0, \quad 1\le \ell<k,
\end{equation}
where $\mathcal{T}(2\ell+j,j)$ is defined by~\eqref{OperatorT-def}.
The equality~\eqref{QGWSID2-Equiv} has the following combinatorial proof.
\par
Consider set partitions of $2\ell+k$ elements into $k$ odd-sized blocks where blocks of size $3$ or greater are coloured red and singleton blocks can be coloured red or blue. Then the sum counts such set partitions weighted by $(-1)^\text{number of blue partitions}$. Note that $j$ is the number of red partitions. Observe that partitions containing at least one singleton can be paired with the partition which differs only in the colour assigned to the singleton with the smallest element, so that the sum counts the number of partitions of $2\ell+k$ elements into $k$ odd-sized blocks of at least $3$ elements each. But, if $k>\ell$, there are no such partitions. The required proof is complete.
\end{proof}

\section{Remarks}
Finally we list several remarks about our main results and related things.

\begin{rem}
The formulation of the series expansions~\eqref{sine-power-ser-expan-eq} and~\eqref{sinc-power-series-2nd-Eq} in Theorems~\ref{Since-series-expan-lem} and~\ref{sinc-power-series-2nd-thm} are better and simpler than corresponding ones in~\cite[pp.~798--799]{Brychkov-ITSF-2009}.
\par
The formula~\eqref{sin-poer-exp} can also be found at \url{https://math.stackexchange.com/a/4331451/} and \url{https://math.stackexchange.com/a/4332549/}.
\end{rem}

\begin{rem}\label{Gelinas-Rem-Blissard}
After reading the preprint~\cite{v1ser-sine-real-pow.tex} of this paper, Jacques G\'elinas, a retired mathematician at Ottawa in Canada, pointed out that the series expansion~\eqref{sine-power-ser-expan-eq} in Theorem~\ref{Since-series-expan-lem}, or say, the series expansion~\eqref{sinc-power-series-2nd-Eq} in Theorem~\ref{sinc-power-series-2nd-thm}, has been considered by John Blissard in~\cite[pp.~50--51]{Blissard-1864} with different and old notations.
\end{rem}

\begin{rem}
The series expansion~\eqref{sine-power-ser-expan-eq} in Theorem~\ref{Since-series-expan-lem} has been applied to answer questions at the sites \url{https://math.stackexchange.com/a/4429078/}, \url{https://math.stackexchange.com/a/4332549/}, and \url{https://math.stackexchange.com/a/4331451/}.
\par
The series expansion~\eqref{sine-power-ser-expan-eq} in Theorem~\ref{Since-series-expan-lem} or the series expansion~\eqref{recip-sin-ser-closed-eq} in Theorem~\ref{recip-sin-ser-closed-thm} can be used to answer questions at \url{https://math.stackexchange.com/q/2267836/} and \url{https://math.stackexchange.com/q/3673133/}.
\par
The series expansion~\eqref{recip-sin-ser-closed-eq} in Theorem~\ref{recip-sin-ser-closed-thm} has been employed to answer questions at the websites \url{https://math.stackexchange.com/a/4427504/}, \url{https://math.stackexchange.com/a/4426821/}, and~\url{https://math.stackexchange.com/a/4428010/}.
\par
The series expansion~\eqref{recip-sin-ser-closed-eq} in Theorem~\ref{recip-sin-ser-closed-thm} has been utilized in~\cite[Theorem~3]{v2Bernoulli-ID-Stack.tex} to derive two closed-form formulas for the Bernoulli numbers $B_{2m}$ in terms of central factorial numbers of the second kind $T(2m+j,j)$.
\end{rem}

\begin{rem}
The first identity in Theorem~\ref{part-Bell-sine-thm} is a special case of the following general conclusion in~\cite[Theorem~1.1]{AIMS-Math20210491.tex}.
\begin{quote}
For $k,n\in\mathbb{N}_0$, $m\in\mathbb{N}$, and $x_{m}\in\mathbb{C}$, we have
\begin{equation*}
B_{2n+1,k}\biggl(0,x_2,0,x_4,\dotsc,\frac{1+(-1)^{k}}{2}x_{2n-k+2}\biggr)=0.
\end{equation*}
\end{quote}
\end{rem}

\begin{rem}
As done in Corollary~\ref{exp-sinc-derv-cor}, as long as the function $f(u)$ is infinitely differentiable at the point $u=1$, Theorem~\ref{part-Bell-sine-thm} can be utilized to compute series expansions at $x=0$ of the functions $f(\sinc x)$ and $f(\sinhc x)$.
\end{rem}

\begin{rem}
Let $r>0$ and $k\in\mathbb{N}_0$. Making use of the Fa\`a di Bruno formula~\eqref{Bruno-Bell-Polynomial} and employing the formula
\begin{equation*}
B_{n,k}(x,1,0,\dotsc,0)
=\frac{1}{2^{n-k}}\frac{n!}{k!}\binom{k}{n-k}x^{2k-n}
\end{equation*}
collected in~\cite[Section~1.4]{Bell-value-elem-funct.tex}, we obtain
\begin{align*}
\biggl[\frac{1}{(1+x^2)^r}\biggr]^{(k)}&=\sum_{j=0}^{k}\frac{\td^j}{\td u^j}\biggl(\frac{1}{u^r}\biggr) B_{k,j}(2x,2,0,\dotsc,0)\\
&=\sum_{j=0}^{k}\frac{\langle-r\rangle_j}{u^{r+j}} 2^jB_{k,j}(x,1,0,\dotsc,0)\\
&=\sum_{j=0}^{k}\frac{\langle-r\rangle_j}{(1+x^2)^{r+j}} 2^j \frac{1}{2^{k-j}}\frac{k!}{j!}\binom{j}{k-j}x^{2j-k}\\
&=\frac{k!}{2^kx^k(1+x^2)^r} \sum_{j=0}^{k}\langle-r\rangle_j\frac{2^{2j}}{j!}\binom{j}{k-j}\frac{x^{2j}}{(1+x^2)^{j}},
\end{align*}
where $u=u(x)=1+x^2$. See also texts at the site \url{https://math.stackexchange.com/a/4418636/}.
\end{rem}

\begin{rem}
We would like to mention the papers~\cite{Sinc-Bound-Chen-RACSAM-2022, Qian-Sinc-Ineq-RACSAM-2022, Zhu-Racsam-81-2020}, in which the power function $\sinc^rz$ for some specific ranges of $r,x\in\mathbb{R}$ is bounded from both sides, and to mention the papers~\cite{jordan-generalized-simp.tex, Gene-Jordan-Inequal.tex, refine-jordan-kober.tex}, in which many bounds of the sinc function $\sinc x$ for $x\in\bigl(0,\frac{\pi}{2}\bigr)$ are established, reviewed, and surveyed.
\end{rem}

\begin{rem}
This paper is a revised version of the electronic arXiv preprint~\cite{ser-sinc-real-pow.tex}.
\end{rem}

\section{Declarations}

\subsection{Acknowledgements}
The authors thank Dr. Jacques G\'elinas, a retired mathematician at Ottawa in Canada, for his hard efforts to look up closely-related references and for his valuable discussions, including those mentioned in Remark~\ref{Gelinas-Rem-Blissard} of this paper.

\subsection{Availability of data and material}
Data sharing is not applicable to this article as no new data were created or analyzed in this study.

\subsection{Competing interests}
The authors declare that they have no any conflict of competing interests.

\subsection{Authors' contributions}
All authors contributed equally to the manuscript and read and approved the final manuscript.

\subsection{Funding}
Not applicable.


\begin{thebibliography}{99}

\bibitem{abram}
M. Abramowitz and I. A. Stegun (Eds), \textit{Handbook of Mathematical Functions with Formulas, Graphs, and Mathematical Tables}, National Bureau of Standards, Applied Mathematics Series \textbf{55}, 10th printing, Washington, 1972.

\bibitem{Baricz-AML-2010}
\'A. Baricz, \emph{Powers of modified Bessel functions of the first kind}, Appl. Math. Lett. \textbf{23} (2010), no.~6, 722\nobreakdash--724; available online at \url{https://doi.org/10.1016/j.aml.2010.02.015}.

\bibitem{Bender-Brody-Meister-JMP-2003}
C. M. Bender, D. C. Brody, and B. K. Meister, \emph{On powers of Bessel functions}, J. Math. Phys. \textbf{44} (2003), no.~1, 309\nobreakdash--314; available online at \url{https://doi.org/10.1063/1.1526940}.

\bibitem{Blissard-1864}
J. Blissard, \emph{Examples of the use and application of representative notation}, Quart. J. Pure Appl. Math. (1863), no.~21, 49\nobreakdash--65.

\bibitem{Borwein-Chamberland-IJMMS-2007}
J. M. Borwein and M. Chamberland, \emph{Integer powers of arcsin}, Int. J. Math. Math. Sci. \textbf{2007}, Art. ID~19381, 10~pages; available online at \url{https://doi.org/10.1155/2007/19381}.

\bibitem{Broder}
A. Z. Broder, \emph{The $r$-Stirling numbers}, Discrete Math. \textbf{49} (1984), no.~3, 241\nobreakdash--259; available online at \url{https://doi.org/10.1016/0012-365X(84)90161-4}.

\bibitem{Brychkov-ITSF-2009}
Yu. A. Brychkov, \emph{Power expansions of powers of trigonometric functions and series containing Bernoulli and Euler polynomials}, Integral Transforms Spec. Funct. \textbf{20} (2009), no.~11-12, 797\nobreakdash--804; available online at \url{https://doi.org/10.1080/10652460902867718}.

\bibitem{BSSV-NFAO-1989}
P. L. Butzer, M. Schmidt, E. L. Stark, and L. Vogt, \emph{Central factorial numbers; their main properties and some applications}, Numer. Funct. Anal. Optim. \textbf{10} (1989), no.~5-6, 419\nobreakdash--488; available online at \url{https://doi.org/10.1080/01630568908816313}.

\bibitem{Carlitz-Fibonacci-1980-I}
L. Carlitz, \emph{Weighted Stirling numbers of the first and second kind, I}, Fibonacci Quart. \textbf{18} (1980), no.~2, 147\nobreakdash--162.

\bibitem{Charalambides-book-2002}
C. A. Charalambides, \textit{Enumerative Combinatorics}, CRC Press Series on Discrete Mathematics and its Applications. Chapman \& Hall/CRC, Boca Raton, FL, 2002.

\bibitem{Sinc-Bound-Chen-RACSAM-2022}
X.-D. Chen, H. Wang, J. Yu, Z. Cheng, and P. Zhu, \emph{New bounds of Sinc function by using a family of exponential functions}, Rev. R. Acad. Cienc. Exactas F\'is. Nat. Ser. A Mat. RACSAM \textbf{116} (2022), no.~1, Paper No.~16, 17~pages; available online at \url{https://doi.org/10.1007/s13398-021-01133-0}.

\bibitem{v2Bernoulli-ID-Stack.tex}
X.-Y. Chen, L. Wu, and F. Qi, \textit{Two identities and closed-form formulas for the Bernoulli numbers in terms of central factorial numbers of the second kind}, Demonstr. Math. (2022), submitted.

\bibitem{Comtet-Combinatorics-74}
L. Comtet, \textit{Advanced Combinatorics: The Art of Finite and Infinite Expansions}, Revised and Enlarged Edition, D. Reidel Publishing Co., 1974; available online at \url{https://doi.org/10.1007/978-94-010-2196-8}.

\bibitem{Comtet-French-1972}
L. Comtet, \emph{Nombres de Stirling g\'en\'eraux et fonctions sym\'etriques}, C. R. Acad. Sci. Paris S\'er. A-B 275 (1972), {\rm A}747\nobreakdash--{\rm A}750. (French)

\bibitem{Gradshteyn-Ryzhik-Table-8th}
I. S. Gradshteyn and I. M. Ryzhik, \emph{Table of Integrals, Series, and Products}, Translated from the Russian, Translation edited and with a preface by Daniel Zwillinger and Victor Moll, Eighth edition, Revised from the seventh edition, Elsevier/Academic Press, Amsterdam, 2015; available online at \url{https://doi.org/10.1016/B978-0-12-384933-5.00013-8}.

\bibitem{AADM-3164.tex}
B.-N. Guo, D. Lim, and F. Qi, \textit{Maclaurin's series expansions for positive integer powers of inverse (hyperbolic) sine and tangent functions, closed-form formula of specific partial Bell polynomials, and series representation of generalized logsine function}, Appl. Anal. Discrete Math. \textbf{16} (2022), no.~2, in press; available online at \url{https://doi.org/10.2298/AADM210401017G}.

\bibitem{AIMS-Math20210491.tex}
B.-N. Guo, D. Lim, and F. Qi, \textit{Series expansions of powers of arcsine, closed forms for special values of Bell polynomials, and series representations of generalized logsine functions}, AIMS Math. \textbf{6} (2021), no.~7, 7494\nobreakdash--7517; available online at \url{https://doi.org/10.3934/math.2021438}.

\bibitem{Wallis-Ratio-Sum.tex}
B.-N. Guo and F. Qi, \textit{On the Wallis formula}, Internat. J. Anal. Appl. \textbf{8} (2015), no.~1, 30\nobreakdash--38.

\bibitem{Bess-Pow-Polyn-CMES.tex}
Y. Hong, B.-N. Guo, and F. Qi, \textit{Determinantal expressions and recursive relations for the Bessel zeta function and for a sequence originating from a series expansion of the power of modified Bessel function of the first kind}, CMES Comput. Model. Eng. Sci. \textbf{129} (2021), no.~1, 409\nobreakdash--423; available online at \url{https://doi.org/10.32604/cmes.2021.016431}.

\bibitem{Howard-Fibonacci-1985}
F. T. Howard, \emph{Integers related to the Bessel function $J_1(z)$}, Fibonacci. Quart. \textbf{23} (1985), no.~3, 249\nobreakdash--257.

\bibitem{jordan-generalized-simp.tex}
Z.-H. Huo, D.-W. Niu, J. Cao, and F. Qi, \textit{A generalization of Jordan's inequality and an application}, Hacet. J. Math. Stat. \textbf{40} (2011), no.~1, 53\nobreakdash--61.

\bibitem{Merca-Period-2016}
M. Merca, \emph{Connections between central factorial numbers and Bernoulli polynomials}, Period. Math. Hungar. \textbf{73} (2016), no.~2, 259\nobreakdash--264; available online at \url{https://doi.org/10.1007/s10998-016-0140-5}.

\bibitem{Migram-IFTS-2006}
M. Milgram, \emph{A new series expansion for integral powers of arctangent}, Integral Transforms Spec. Funct. \textbf{17} (2006), no.~7, 531\nobreakdash--538; available online at \url{https://doi.org/10.1080/10652460500422486}.

\bibitem{Moll-Vignat-IJNT-2014}
V. H. Moll and C. Vignat, \emph{On polynomials connected to powers of Bessel functions}, Int. J. Number Theory \textbf{10} (2014), no.~5, 1245\nobreakdash--1257; available online at \url{https://doi.org/10.1142/S1793042114500249}.

\bibitem{Gene-Jordan-Inequal.tex}
D.-W. Niu, J. Cao, and F. Qi, \textit{Generalizations of Jordan's inequality and concerned relations}, Politehn. Univ. Bucharest Sci. Bull. Ser. A Appl. Math. Phys. \textbf{72} (2010), no.~3, 85\nobreakdash--98.

\bibitem{OEIS-A136630}
OEIS Foundation Inc., \emph{Entry A136630 in The On-Line Encyclopedia of Integer Sequences}, (2022), available online at \url{https://oeis.org/A136630}.

\bibitem{NIST-HB-2010}
F. W. J. Olver, D. W. Lozier, R. F. Boisvert, and C. W. Clark (eds.), \emph{NIST Handbook of Mathematical Functions}, Cambridge University Press, New York, 2010; available online at \url{http://dlmf.nist.gov/}.

\bibitem{Tan-Der-App-Thanks.tex}
F. Qi, \textit{Derivatives of tangent function and tangent numbers}, Appl. Math. Comput. \textbf{268} (2015), 844\nobreakdash--858; available online at \url{https://doi.org/10.1016/j.amc.2015.06.123}.

\bibitem{Maclaurin-series-arccos-v3.tex}
F. Qi, \textit{Explicit formulas for partial Bell polynomials, Maclaurin's series expansions of real powers of inverse (hyperbolic) cosine and sine, and series representations of powers of Pi}, Research Square (2021), available online at \url{https://doi.org/10.21203/rs.3.rs-959177/v3}.

\bibitem{ser-sinc-real-pow.tex}
F. Qi, \textit{Several series expansions for real powers and several closed-form formulas for partial Bell polynomials with relation to the sinc and sinhc functions in terms of central factorial numbers and Stirling numbers of the second kind}, arXiv (2022), available online at \url{https://arxiv.org/abs/2204.05612v3}.

\bibitem{v1ser-sine-real-pow.tex}
F. Qi, \emph{Series expansions for any real powers of (hyperbolic) sine functions in terms of weighted Stirling numbers of the second kind}, arXiv (2022), available online at \url{https://arxiv.org/abs/2204.05612v1}.

\bibitem{Taylor-arccos-v2.tex}
F. Qi, \textit{Taylor's series expansions for real powers of functions containing squares of inverse (hyperbolic) cosine functions, explicit formulas for special partial Bell polynomials, and series representations for powers of circular constant}, arXiv (2021), available online at \url{https://arxiv.org/abs/2110.02749v2}.

\bibitem{msaen-566448.tex}
F. Qi and B.-N. Guo, \textit{Relations among Bell polynomials, central factorial numbers, and central Bell polynomials}, Math. Sci. Appl. E-Notes \textbf{7} (2019), no.~2, 191\nobreakdash--194; available online at \url{https://doi.org/10.36753/mathenot.566448}.

\bibitem{refine-jordan-kober.tex}
F. Qi, D.-W. Niu, and B.-N. Guo, \textit{Refinements, generalizations, and applications of Jordan's inequality and related problems}, J. Inequal. Appl. \textbf{2009}, Article ID 271923, 52~pages; available online at \url{https://doi.org/10.1155/2009/271923}.

\bibitem{Bell-value-elem-funct.tex}
F. Qi, D.-W. Niu, D. Lim, and Y.-H. Yao, \textit{Special values of the Bell polynomials of the second kind for some sequences and functions}, J. Math. Anal. Appl. \textbf{491} (2020), no.~2, Article 124382, 31~pages; available online at \url{https://doi.org/10.1016/j.jmaa.2020.124382}.

\bibitem{Wilf-Ward-2010P.tex}
F. Qi and M. D. Ward, \textit{Closed-form formulas and properties of coefficients in Maclaurin's series expansion of Wilf's function composited by inverse tangent, square root, and exponential functions}, arXiv (2022), available online at \url{https://arxiv.org/abs/2110.08576v2}.

\bibitem{centr-bell-polyn.tex}
F. Qi, G.-S. Wu, and B.-N. Guo, \textit{An alternative proof of a closed formula for central factorial numbers of the second kind}, Turk. J. Anal. Number Theory \textbf{7} (2019), no.~2, 56\nobreakdash--58; available online at \url{https://doi.org/10.12691/tjant-7-2-5}.

\bibitem{Qian-Sinc-Ineq-RACSAM-2022}
C. Qian, X.-D. Chen, and B. Malesevic, \emph{Tighter bounds for the inequalities of Sinc function based on reparameterization}, Rev. R. Acad. Cienc. Exactas F\'is. Nat. Ser. A Mat. RACSAM \textbf{116} (2022), no.~1, Paper No.~29, 38~pages; available online at \url{https://doi.org/10.1007/s13398-021-01170-9}.

\bibitem{Riordan-B-1968}
J. Riordan, \emph{Combinatorial Identities}, Reprint of the 1968 original, Robert E. Krieger Publishing Co., Huntington, N.Y., 1979.

\bibitem{Sanchez-Reyes-CMJ-2012}
J. S\'anchez-Reyes, \emph{The hyperbolic sine cardinal and the catenary}, College Math. J. \textbf{43} (2012), no.~4, 285\nobreakdash--290; availble online at \url{https://doi.org/10.4169/college.math.j.43.4.285}.

\bibitem{Thir-Nanj-1951-India}
V. R. Thiruvenkatachar and T. S. Nanjundiah, \emph{Inequalities concerning Bessel functions and orthogonal polynomials}, Proc. Ind. Acad. Sci. Sect. A \textbf{33} (1951), 373\nobreakdash--384.

\bibitem{Yang-Zhen-MIA-2018-Bessel}
Z.-H. Yang and S.-Z. Zheng, \emph{Monotonicity and convexity of the ratios of the first kind modified Bessel functions and applications}, Math. Inequal. Appl. \textbf{21} (2018), no.~1, 107\nobreakdash--125; available online at \url{https://doi.org/10.7153/mia-2018-21-09}.

\bibitem{Zhu-Racsam-81-2020}
L. Zhu, \emph{Some new bounds for Sinc function by simultaneous approximation of the base and exponential functions}, Rev. R. Acad. Cienc. Exactas F\'is. Nat. Ser. A Mat. RACSAM \textbf{114} (2020), no.~2, Paper No.~81, 17~pages; available online at \url{https://doi.org/10.1007/s13398-020-00811-9}.

\end{thebibliography}
\end{document}